\theoremstyle{plain}
\newtheorem{theorem}{Theorem}[section]
\newtheorem{proposition}[theorem]{Proposition}
\newtheorem{corollary}[theorem]{Corollary}
\theoremstyle{definition}
\newtheorem{definition}[theorem]{Definition}
\newtheorem{remark}[theorem]{Remark}
\newtheorem{question}[theorem]{Question}
\newtheorem{example}[theorem]{Example}
\newtheorem{examples}[theorem]{Examples}
\newcommand{\N}{{\mathbb{N}}}
\newcommand{\Z}{{\mathbb{Z}}}
\newcommand{\R}{{\mathbb{R}}}
\newcommand{\F}{{\mathbb{F}}}
\newcommand{\p}{{\mathfrak{p}}}
\newcommand{\q}{{\mathfrak{q}}}
\newcommand{\m}{{\mathfrak{m}}}
\newcommand{\afrak}{{\mathfrak{a}}}
\newcommand{\EC}{{\mathcal{E}}}
\begin{document}

\title[Pseudomeadows] {Some properties of pseudomeadows}

\author{Hamid Kulosman}
\address{Department of Mathematics\\ 
University of Louisville\\
Louisville, KY 40292, USA}
\email{hamid.kulosman@louisville.edu}

\subjclass[2010]{Primary 08B26, 08A70, 08A99, 13M99, 68Q65; Se\-con\-dary 08A05, 08A70}

\keywords{Absolutely flat ring; direct product of fields; idempotent; meadow; pseudomeadow; pseudoring; subdirect product of fields; von Neumann regular ring}

\date{}

\begin{abstract} 
The purpose of this paper is to study the commutative pseudomeadows, the structure which is defined in the same way as commutative meadows, except that the existence of a multiplicative identity is not required. We extend the characterization of finite commutative meadows, given by I.~Bethke, P.~Rodenburg, and A.~Sevenster in their 2015 paper, to the case of commutative pseudomeadows with finitely many idempotents. We also extend the well-known characterization of general commutative meadows as the subdirect products of fields to the case of commutative pseudomeadows. Finally we investigate localizations of commutative pseudomeadows.
\end{abstract}

\maketitle

%---------------------------------
\section{Introduction}

The notion of a {\it regular ring} (in the context of non-commutative rings) was introduced by J.~von Neumann in 1930 in \cite{n} for the purpose of clarifying some concepts that appeared in two of his papers in the area of Functional Analysis, that were published at that time. Since in Commutative Algebra the notion of a regular ring has a different meaning, the name widely used from 1960's on is {\it von Neumann regular rings} (in both commutative and non-commutative context). Another name often used in Commutative Algebra is {\it absolutely flat rings}. From the point of view of Homological Algebra they were studied in detail in the paper \cite{o}, where the definition (in the context of Homological Algebra) was attributed to N.~Bourbaki (see \cite[Ch. 1 \S 2 ex. 16 and 17, ch. 2 \S 4 ex. 16]{b}). Finally, in recent years the notion of a {\it meadow} appeared in the literature (see, for example, the papers \cite{bb, brs}). Since I am exclusively interested in commutative multiplication, I consider the notion of a commutative  meadow to be the same as the notion of a commutative von Neumann regular ring (in accordance with \cite[Definition 2.7]{brs}). The precise relation between these two notions from the point of view of Logic is explained, for example, in \cite{bb}.

\medskip
The inspiration for writing this paper is coming from the paper \cite{brs} by I.~Bethke, P.~Rodenburg, and A.~Sevenster (for which I wrote a review \cite{kul} for Mathematical Reviews). 
In \cite{brs} the authors characterized finite commutative meadows as direct products of fields. I was wondering if their characterization could be extended to the context of pseudorings. (Pseudorings are defined in the same way as rings, the only difference is that the existence of a multiplicative identity is not required.) I gave the name {\it pseudomeadows} to the pseudoring version of meadows. My interest was in the commutative pseudomeadows only and I was actually able to show that commutative pseudomeadows  with finitely many idempotents are finite direct products of fields, thus generalizing the characterization from \cite{brs}. The proof is in the spirit of \cite{brs}. This is done in Section 3.

\medskip
It was then natural to try to characterize  general commutative pseudo\-mea\-dows. In Section 4 I first extended to commutative pseudomeadows the well-known characterization of commutative meadows as reduced rings in which every prime ideal is maximal. One of the difficulties in dealing with pseudorings instead of rings is the fact that not every pseudoring has maximal ideals, and, even when they do have them, they are not necessarily prime (even when the ring in question is reduced). That is probably the reason why pseudomeadows were not studied in the literature. (The only exception, to the best of my knowledge, is an exercise in Kaplansky's book \cite{k}, which provided one of the directions of the just mentioned characterization.) The next step was the cha\-rac\-te\-rization of commutative pseudomeadows as subdirect products of fields. The meadow version of that statement was given by G.~Birkhoff in \cite{b} and is attributed to G.~K\"othe (see \cite{ko}), but the arguments in these sources needed to be completed in order to amount to a proof of the statement.

\medskip
Finally in Section 5 I investigated localizations of pseudomeadows and showed that for each maximal ideal $\m$ of a pseudomeadow $R$ we have that $R_\m$ is a field which is naturally isomorphic to the field $R/\m$, and in particular that $\m_\m=(0)$. There are in the literature meadow versions of some of the statements, but the proofs are different (as I had to deal with the non-existence of the identity element, which makes everything more complicated). At the end of the section I proved two local-global principles for commutative pseudomeadows, which are standard for commutative rings, but, in general, do not hold in commutative pseudorings.

\medskip
In each od the sections 2,3,4,5 I obtained some ``simpler" properties of commutative pseudomeadows and von Neumann invertible elements, not all of which were used in the ``major" statements, but are interes\-ting in their own right.  

%----------------------------------
\section{Notation and preliminaries}

\begin{definition}
A {\it commutative pseudoring} $(R,0,+,\cdot,-)$  is a set $R$ with a distinguished element $0$ (called the {\it zero element}), two binary operations $(x,y)\mapsto x+y: R\times R\to R$ and $(x,y)\mapsto x\cdot y: R\times R\to R$ (denoted also $(x,y)\mapsto xy$) and an unary operation $x\mapsto -x:R\to R$, such that the following axioms hold:

(1) $(x+y)+z=x+(y+z)$ for all $x,y,z\in R$;

(2) $x+y=y+x$ for all $x,y\in R$;

(3) $x+0=x$ for all $x\in R$;

(4) $x+(-x)=0$ for all $x\in R$;

(5) $(xy)z=x(yz)$ for all $x,y,z\in R$;

(6) $xy=yx$ for all $x,y\in R$;

(7) $x(y+z)=xy+xz$ for all $x,y,z\in R$;

(8) $(x+y)z=xz+yz$ for all $x,y,z\in R$. 
\end{definition}

From now on we will call commutative pseudorings just {\it pseudorings}.

\begin{definition}
Let $R$ be a pseudoring and $x$ an element of $R$. We say that $x$ is {\it von Neumann invertible} if there exists an element $x^{(-1)}\in R$ such that $xxx^{(-1)}=x$ and $xx^{(-1)}x^{(-1)}=x^{(-1)}$. We call the element $x^{(-1)}$ a {\it von Neumann inverse} of $x$.
\end{definition}

\begin{proposition}[{\cite[Lemma 2]{o}}]\label{vNi_first}
Let $R$ be a pseudoring. An element $x\in R$ is von Neumann invertible if and only if $x\in Rx^2$.
\end{proposition}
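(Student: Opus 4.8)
The plan is to prove the two implications separately, with the reverse implication carrying essentially all of the content.

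The forward direction is immediate. If $x$ is von Neumann invertible with inverse $x^{(-1)}$, then the first defining equation $xxx^{(-1)}=x$ reads $x^{(-1)}x^2=x$ (using commutativity), which exhibits $x$ as an element of $Rx^2$. No further work is needed here.

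For the reverse direction, suppose $x=rx^2$ for some $r\in R$; I must manufacture a von Neumann inverse out of $r$ and $x$ alone, since in a pseudoring there is no identity element to fall back on and no cancellation is available. The key preliminary step is to iterate the hypothesis so as to obtain the single identity $x=r^2x^3$: substituting $x=rx^2$ into $r^2x^3=r(rx^2)x$ and simplifying by commutativity collapses the right-hand side to $rx^2=x$. This identity will do all the remaining work. With it in hand, the natural candidate is $x^{(-1)}:=r^2x$, and I would then verify the two defining equations directly. The first, $x^2x^{(-1)}=x$, becomes $x^2\cdot r^2x=r^2x^3$, which equals $x$ by the identity above. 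The second, $x(x^{(-1)})^2=x^{(-1)}$, becomes $x\cdot r^4x^2=r^4x^3=r^2(r^2x^3)=r^2x=x^{(-1)}$, again by the identity. Both checks thus reduce to $x=r^2x^3$ and require only rearrangement via associativity and commutativity.

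The only real obstacle is locating the correct candidate inverse. The first guess $x^{(-1)}=r$ satisfies $x^2r=x$ but in general fails the second reflexivity equation $x(x^{(-1)})^2=x^{(-1)}$, so one is forced to symmetrize the guess to $r^2x$. Once the identity $x=r^2x^3$ has been isolated, both verifications are routine, so I expect the actual write-up to be short.
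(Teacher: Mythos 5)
Your proposal is correct and matches the paper's proof exactly: the forward direction is the same one-line observation, and for the converse the paper also takes $x^{(-1)}=r^2x$ as the von Neumann inverse (leaving the verification, which you carry out via the identity $x=r^2x^3$, to the reader). Nothing to add.
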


\begin{proof}
If $x$ is von Neumann invertible, then $x=xxx^{(-1)}$, hence $x\in Rx^2$. Conversely, suppose that $x=rx^2$ for some $r\in R$. Then one can check that the element $x^{(-1)}=r^2x$ is a von Neumann inverse of $x$. 
\end{proof}

The proof of the next simple proposition, given in \cite{o}, works for rings, but not for pseudorings.

\begin{proposition}[{\cite[Lemma 2]{o}}]
Let $R$ be a pseudoring and $x\in R$. If there exists a von Neumann inverse of $x$, it is unique.
\end{proposition}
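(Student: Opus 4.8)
The plan is to take two von Neumann inverses $y$ and $z$ of $x$ and to deduce $y=z$ by pure algebra, replacing the use of a multiplicative identity (which, as the remark indicates, drives the argument in \cite{o}) by the commutativity of $R$. First I would unpack the defining relations with the help of axiom (6). The conditions for $y$ become $x^2y=x$ and $xy^2=y$, and those for $z$ become $x^2z=x$ and $xz^2=z$. Commutativity immediately upgrades these to the symmetric forms $xyx=x$, $yxy=y$, $xzx=x$, $zxz=z$, so that $y$ and $z$ are \emph{reflexive} generalized inverses of $x$; this symmetric information is exactly what the unital proof would otherwise extract from the element $1$.

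The key step is to introduce the two elements $e=xy$ and $f=xz$ and to show that they coincide. Each is idempotent: for instance $e^2=x^2y^2=(x^2y)y=xy=e$, and similarly $f^2=f$. To compare them I would substitute $x=x^2z$ into $e=xy$, obtaining $e=(x^2z)y=(xz)(xy)=fe$, and symmetrically $f=(x^2y)z=(xy)(xz)=ef$. Since $R$ is commutative we have $fe=ef$, and therefore $e=fe=ef=f$; that is, $xy=xz$. From this the conclusion follows quickly: multiplying $xy=xz$ on the right by $y$ gives $xy^2=xzy$, i.e.\ $y=xzy$, while multiplying by $z$ gives $xyz=xz^2$, i.e.\ $xyz=z$. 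Commutativity identifies $xzy$ with $xyz$, so $y=xzy=xyz=z$.

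I expect the only genuine obstacle to be the equality $xy=xz$. In a ring with identity one can slide the idempotent $xz$ past $y$ through expressions such as $y=y\cdot 1$, but no unit is available in a pseudoring, so this move is simply unavailable. The idempotent cross-multiplication $e=fe=ef=f$ is precisely the device that compensates for the missing identity, relying only on associativity and commutativity together with the substitution $x=x^2z$. Once $xy=xz$ is in hand, everything else is a one-line computation, so the entire difficulty of the statement is concentrated in establishing that the two associated idempotents agree.
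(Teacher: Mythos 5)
Your proposal is correct and follows essentially the same route as the paper: both arguments hinge on first establishing $xy=xz$ for two von Neumann inverses $y,z$ (the paper does this by associating the triple product $yx^2z$ in two ways, which is the same manipulation your idempotent identities $e=fe$ and $f=ef$ repackage), and then multiplying $xy=xz$ by $y$ and by $z$ to conclude $y=z$. The detour through the idempotents $e=xy$, $f=xz$ is harmless but not needed; otherwise the two proofs coincide.
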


\begin{proof}
Suppose that $x^{(-1)}$ and $x^{((-1))}$ are both von Neumann inverses of $x$. Then $x^{(-1)}x^2x^{((-1))}=(x^{(-1)}x^2)x^{((-1))}=xx^{((-1))}$, but also 
$x^{(-1)}x^2x^{((-1))}$  $=x^{(-1)}(x^2x^{((-1))})=x^{(-1)}x$. Hence $xx^{(-1)}=xx^{((-1))}$. If we multiply this equality once by $x^{(-1)}$ and second time by $x^{((-1))}$ we get $x^{(-1)}xx^{((-1))}=x^{(-1)}$ and $x^{(-1)}xx^{((-1))}=x^{((-1))}$ respectively. Thus $x^{(-1)}=x^{((-1))}$.
\end{proof}

\begin{definition}
A {\it pseudomeadow} $(R,0,+,\cdot,-,^{(-1)})$  is a pseudoring $(R,0,+,\cdot,-)$ which has an additional unary operation $x\mapsto x^{(-1)}:R\to R$, such that, in addition to the axioms of a pseudoring, the following axiom holds:

(9) $xxx^{(-1)}=x$ and $xx^{(-1)}x^{(-1)}=x^{(-1)}$ for all $x\in R$.
\end{definition}

\begin{definition}
A {\it ring} (resp. {\it meadow}) $(R,0,1,+,\cdot,-)$ (resp. $(R,0,1,+,$  $\cdot,-,^{(-1)})$) is a pseudoring $(R,0,+,\cdot,-)$ (resp. pseudomeadow $(R,0,+,\cdot,$ $-,^{(-1)})$) with an additional distinguished element $1$ (called the {\it identity element}) such that the following axiom holds in addition to the axioms (1)-(8) (resp. (1)-(9)):

(10) $x\cdot 1=x$ for all $x\in R$.

\noindent When in a ring or meadow $R$ for an $x\in R$ there exists an $x'\in R$ such that $xx'=1$, such an $x'$ is unique and is called the {\it multiplicative inverse} of $x$. It is denoted by $x^{-1}$. We then say that $x$ is {\it invertible}.
\end{definition}

For every invertible element $x$ of a meadow $R$ we have $x^{(-1)}=x^{-1}$. Also every invertible element $x$ of a ring $R$ is von Neumann invertible and $x^{(-1)}=x^{-1}$. 

\smallskip
If $(R,0,+,\cdot,-)$ (resp. $(R,0,1,+,\cdot,-)$) is a pseudoring (resp. ring) such that there exists a unary operation $x\mapsto x^{(-1)}:R\to R$ which makes it a pseudomeadow (resp. meadow), we sometimes say that a pseudoring (resp. ring) $R$ is a pseudomeadow (resp. meadow).

\begin{examples} (1) Every field is meadow. Indeed, on every field $(F,0,1,+,\cdot,-)$ we have a unary operation $x\mapsto x^{(-1)}:F\to F$, defined by
\begin{equation*}
x^{(-1)}=
    \begin{cases}
      0, & \text{if $x=0$,}\\
      x^{-1}, & \text{if $x\ne 0$,}
     \end{cases}
\end{equation*}
which makes it a meadow. A meadow of this type is called a {\it zero-totalized field}.

(2) Every product of pseudomeadows (resp. meadows) is a pseudomeadow (resp. meadow). In particular, every product of fields is a meadow. (Indeed, we have a unary operation $(x_i)\mapsto (x_i)^{(-1)}:=(x_i^{(-1)})$ which makes $\prod R_i$ a pseudomeadow (resp. meadow).)
\end{examples}

\begin{definition}
A subpseudoring of the direct product $\prod_{i\in I} R_i$  of a family of pseudorings is called a {\it subdirect product} of the family $(R_i)_{i\in I}$ if for every $i_0\in I$ and every $y\in R_{i_0}$ there is an element $(x_i)\in R$ such that $x_{i_0}=y$.
\end{definition}

\begin{definition}
Let $R$ be a pseudoring. An element $e\in R$ is called an {\it idempotent} of $R$ if $e^2=e$. The set of all idempotents of $R$ is denoted by $\EC(R)$. (It is non-empty as $0\in\EC(R)$.) On the set $\EC(R)$ we introduce a partial order in the following way: for any $e,f\in\EC(R)$ we put $e\le f$ if $ef=e$. (It is easy to verify that this, indeed, is a partial order.) An element $e\in\EC(R)$ is called a {\it minimal idempotent} of $R$ if $e\ne 0$ and for every non-zero $f\in\EC(R)$, $f\le e$ implies $f=e$. Two idempotents $e,f$ are said to be {\it orthogonal} if $ef=0$. For every von Neumann invertible element $x\in R$ we define $e(x):=xx^{(-1)}$.
\end{definition}

The statements from the next proposition are proved in \cite{brs} for the case of rings, but everything works for pseudorings without any change. We will often be using these statements, usually without explicitly referring to the proposition.

\begin{proposition}[{\cite{brs}}]\label{idem_tools}
Let $R$ be a pseudoring.

(a) Every idempotent $e\in R$ is von Neumann invertible and $e^{(-1)}=e$. 

(b) If $e,f$ are two idempotents of $R$, then $ef$ is an idempotent of $R$.

(c) Any two minimal idempotents of $R$ are orthogonal.

(d) If $e,f$ are two idempotents of $R$ and $e\le f$, then $f-e$ is also an idempotent.

(e) If $e,f$ are two orthogonal idempotents of $R$, then $e+f$ is an idempotent.

(f) If $x\in R$ is von Neumann invertible, then $e(x):=xx^{(-1)}$ is an idempotent. Moreover, $e(x)=0$ if and only if $x=0$. 

(g) If $R$ is a pseudomeadow and $e$ an idempotent of $R$, then $Re$ is a meadow with the identity element $e$.

(h) If $R$ is a pseudomeadow and $e$ a minimal idempotent of $R$, then $Re$ is a field with the identity element $e$. Moreover, if $xe\ne 0$, then $(xe)^{-1}=x^{(-1)}e$.
\end{proposition}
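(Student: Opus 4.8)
The plan is to prove Proposition~\ref{idem_tools} one part at a time, since the statements build on one another, and to use Proposition~\ref{vNi_first} (von Neumann invertibility is equivalent to $x \in Rx^2$) as the basic tool throughout. For part (a), I would observe that an idempotent $e$ satisfies $e = e\cdot e\cdot e$ and $e = e\cdot e\cdot e$, so taking $e^{(-1)} = e$ directly verifies axiom~(9); uniqueness then gives $e^{(-1)}=e$. For part (b), given idempotents $e,f$, I would compute $(ef)^2 = e^2f^2 = ef$ using commutativity and associativity. Part (f) is closely related: if $x$ is von Neumann invertible, then $e(x)^2 = (xx^{(-1)})(xx^{(-1)}) = (xxx^{(-1)})x^{(-1)} = xx^{(-1)} = e(x)$ using axiom~(9), so $e(x)$ is idempotent; and $e(x)=0$ forces $x = xxx^{(-1)} = x\cdot e(x) = 0$, while $x=0$ trivially gives $e(x)=0$.

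Next I would handle the elementary order-theoretic parts. For part (e), if $ef=0$ then $(e+f)^2 = e^2 + 2ef + f^2 = e + f$. For part (d), if $e\le f$, meaning $ef=e$, I would compute $(f-e)^2 = f^2 - 2ef + e^2 = f - 2e + e = f - e$, so $f-e$ is idempotent. For part (c), suppose $e,f$ are minimal idempotents; then $ef$ is an idempotent by~(b), and I would check $ef \le e$ and $ef \le f$ since $(ef)e = ef$ and $(ef)f = ef$. If $ef \neq 0$, minimality of $e$ (applied to the nonzero idempotent $ef \le e$) gives $ef = e$, and similarly $ef = f$, whence $e = f$; so distinct minimal idempotents must have $ef = 0$.

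The substantive parts are (g) and (h), and these I expect to be the main obstacle since they assert a multiplicative-identity structure is recovered on $Re$. For part (g), I would first show $e$ acts as an identity on $Re$: for $re \in Re$, we have $(re)e = re^2 = re$, so axiom~(10) holds in $Re$ with identity $e$. Then $Re$ is a ring, and I would equip it with the restricted von Neumann inverse operation, checking that $Re$ is closed under $x\mapsto x^{(-1)}$ and that axiom~(9) persists, so $Re$ is a meadow. The delicate point is verifying closure: for $re \in Re$ I would want to identify its von Neumann inverse inside $Re$, and I expect $(re)^{(-1)} = (re)^{(-1)}e$ to work, using uniqueness of the von Neumann inverse and the idempotence of $e$ to push $e$ through.

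For part (h), with $e$ a minimal idempotent, I would build on (g): $Re$ is already a meadow with identity $e$, so it suffices to show every nonzero element of $Re$ is invertible. Given $xe \neq 0$, by part (f) the element $e(xe) = (xe)(xe)^{(-1)}$ is a nonzero idempotent, and I would verify $e(xe) \le e$ (since $e(xe)$ lies in $Re$ and $e$ is the identity there, $e(xe)\cdot e = e(xe)$); minimality of $e$ then forces $e(xe) = e$, i.e. $(xe)(xe)^{(-1)} = e$. This exhibits $(xe)^{(-1)}$ as a multiplicative inverse of $xe$ in the ring $Re$, so $Re$ is a field. Finally, to prove the formula $(xe)^{-1} = x^{(-1)}e$, I would check that $x^{(-1)}e$ lies in $Re$ and that $(xe)(x^{(-1)}e) = e$ by computing $xx^{(-1)}e = e(x)e$ and arguing this equals $e$ via the same minimality consideration; uniqueness of the multiplicative inverse then closes the argument.
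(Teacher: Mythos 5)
Your proposal is correct, and in fact it supplies more than the paper does: the paper gives no proof of Proposition~\ref{idem_tools} at all, merely remarking that the statements are proved in the cited reference for rings and carry over to pseudorings unchanged. Parts (a)--(f) are verified exactly as one would expect. For (g), your candidate inverse $(re)^{(-1)}e$ does work (and in fact $(re)^{(-1)}$ already lies in $Re$ automatically, since $(re)^{(-1)}=(re)(re)^{(-1)}(re)^{(-1)}$ carries a factor of $e$), so closure is not an issue. For (h), the minimality argument giving $(xe)(xe)^{(-1)}=e$ is right.

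The only point worth tightening is the last step of (h): to conclude $e(x)e=e$ ``via the same minimality consideration'' you must first check that the idempotent $e(x)e=xx^{(-1)}e$ is nonzero, which is not automatic from $xe\ne 0$ without a line of justification. It does follow quickly: if $xx^{(-1)}e=0$ then $xe=xxx^{(-1)}e=x\,(xx^{(-1)}e)=0$, a contradiction; alternatively, $(xx^{(-1)}e)(xe)=xe\ne 0$. With that one-line patch the argument is complete, and uniqueness of inverses in the field $Re$ gives $(xe)^{-1}=x^{(-1)}e$ as you say.
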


\begin{definition}
Let $R,R'$ be pseudorings. A map $h:R\to R'$ is called a {\it pseudoring homomorphism} if $h(x+y)=h(x)+h(y)$ and $h(xy)=h(x)h(y)$ for all $x,y\in R$. If $R,R'$ are rings, then $h$ is a {\it ring homomorphism} if it, additionally, satisfies $f(1)=1$. A bijective pseudoring homomorphism (resp. ring homomorphism) is called a {\it pseudoring isomorphism} (resp. {\it ring isomorphism}). We will call pseudoring homomorphisms (resp. pseudoring isomorphisms) simply {\it homomorphisms} (resp. {\it isomorphisms}).
\end{definition}

The next proposition is easy to prove.

\begin{proposition}\label{idem_preservation}
(a) If $R,R'$ are two pseudorings and $h:R\to R'$ an isomorphism, then $h$ induces a restriction $h:\EC(R)\to \EC(R')$ and it is an isomorphism of partially ordered sets. In particular, $e$ is a minimal idempotent of $R$ if and only if $h(e)$ is a minimal idempotent of $R'$.

(b) Every isomorphism $h:R\to  R'$ between two rings is necessarily a ring isomorphism.
\end{proposition}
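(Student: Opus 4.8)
The plan is to treat the two parts separately, each reducing to the observation that a pseudoring isomorphism and its inverse preserve the defining algebraic relations. For part (a), I would first check that $h$ carries idempotents to idempotents: if $e^2=e$, then $h(e)^2=h(e^2)=h(e)$, so $h(e)\in\EC(R')$, giving the claimed restriction $h\colon\EC(R)\to\EC(R')$. Since $h$ is bijective, its inverse $h^{-1}\colon R'\to R$ is again a pseudoring homomorphism (write every element of $R'$ as $h(x)$ and transport the relations), so $h^{-1}$ likewise sends $\EC(R')$ into $\EC(R)$; hence the restricted map on idempotents is a bijection whose inverse is the restriction of $h^{-1}$. To see it is an order isomorphism, recall that $e\le f$ means $ef=e$. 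Using multiplicativity together with injectivity, one has $ef=e\iff h(ef)=h(e)\iff h(e)h(f)=h(e)\iff h(e)\le h(f)$, so $h$ and its inverse are both order-preserving.

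For the ``in particular'' clause I would first note that $h(0)=0$: applying $h$ to $0+0=0$ and cancelling yields $h(0)+h(0)=h(0)$, hence $h(0)=0$. Consequently $h$ restricts to an order isomorphism between $\EC(R)\setminus\{0\}$ and $\EC(R')\setminus\{0\}$. Since the minimality of an idempotent $e$ is exactly the assertion that $e$ is a nonzero idempotent with no nonzero strict predecessor in the order $\le$, and this is a property of the ordered set of idempotents alone, $e$ is a minimal idempotent of $R$ if and only if $h(e)$ is a minimal idempotent of $R'$.

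For part (b), let $1$ and $1'$ denote the identity elements of $R$ and $R'$. The key point is that $1'$ is the unique element of $R'$ acting as a two-sided identity, so it suffices to show that $h(1)$ has this property. Given any $y\in R'$, surjectivity provides $x\in R$ with $y=h(x)$, and then $y\,h(1)=h(x)h(1)=h(x\cdot 1)=h(x)=y$. By uniqueness of the identity we conclude $h(1)=1'$, which is precisely the extra condition needed for $h$ to qualify as a ring isomorphism.

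None of these steps presents a genuine difficulty; the only point that deserves care is the verification that the inverse of a bijective pseudoring homomorphism is again a pseudoring homomorphism. This is what lets the order-preservation argument run in both directions and thereby upgrades the map on idempotents from a mere order-preserving bijection to an honest isomorphism of partially ordered sets.
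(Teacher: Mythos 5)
Your proof is correct and is exactly the routine verification the paper has in mind; the paper itself omits the proof, stating only that the proposition is easy to prove. All the steps you flag as needing care (that the inverse of a bijective pseudoring homomorphism is again a homomorphism, that $h(0)=0$, and the use of surjectivity to identify $h(1)$ with $1'$) are handled properly.
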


\begin{definition}
Let $R$ be a pseudoring.

(a) An ideal $\p$ of $R$ is said to be {\it prime} if it is proper and
\begin{equation*}
(\forall\,x,y\in R)\; xy\in\p \Rightarrow x\in \p \text{ or } y\in \p.
\end{equation*} 
The set of all prime ideals of $R$ is called the {\it spectrum} of $R$ and denoted by $\mathrm{Spec}(R)$. The intersection of all prime ideals of $R$ is called the {\it nil radical} of $R$ and is denoted by $\mathfrak{N}(R)$.

(b) An ideal $\m$ of R is said to be {\it maximal} if it is proper and if for every ideal $I$ of $R$, $I\supseteq \m$ implies $I=\m$ \text{ or } $I=R$.
The set of all maximal ideals of $R$ is called the {\it maximal spectrum} of $R$ and denoted by $\mathrm{MaxSpec}(R)$. The intersection of all maximal ideals of $R$ is called the {\it Jacobson radical} of $R$ and is denoted by $\mathfrak{J}(R)$.

(c) $R$ is called a {\it pseudodomain} if $R\ne \{0\}$ and
\begin{equation*}
(\forall\,x,y\in R)\; xy=0 \Rightarrow x=0 \text{ or } y=0.
\end{equation*}

(d) An element $x\in R$ is said to be {\it nilpotent} if $x^n=0$ for some $n\ge 1$.

(e) $R$ is  said to be {\it reduced} if it is without non-zero nilpotent elements.
\end{definition}

The next proposition is easy to prove. 

\begin{proposition}\label{quotient_pseudodomain}
Let $R$ be a pseudoring and $\p$ an ideal of $R$. Then $\p$ is prime if and only if $R/\p$ is a pseudodomain.
\end{proposition}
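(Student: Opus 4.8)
The plan is to run the standard correspondence between an ideal and the zero coset of its quotient, being careful that every step survives the absence of a multiplicative identity. First I would record that $R/\p$ is again a pseudoring under the operations $(x+\p)+(y+\p)=(x+y)+\p$ and $(x+\p)(y+\p)=xy+\p$; the only nonformal point is well-definedness of multiplication, which follows from $\p$ being an ideal (if $x-x',y-y'\in\p$ then $xy-x'y'=(x-x')y+x'(y-y')\in\p$, since an ideal absorbs products). The zero of $R/\p$ is the coset $\p$, and for $x\in R$ one has $x+\p=0$ in $R/\p$ if and only if $x\in\p$. All the axioms (1)--(8) pass to the quotient verbatim, so no identity element is needed anywhere in this construction.

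With this dictionary in hand, I would translate the two defining clauses of primeness one at a time. For the properness clause, note that $R/\p=\{0\}$ means every coset equals $\p$, i.e. $R=\p$; hence $\p$ is proper exactly when $R/\p\ne\{0\}$. This is the place where the no-identity setting could in principle cause trouble, since in a ring one usually phrases ``proper'' as $1\notin\p$; but because properness is defined here as $\p\ne R$, the equivalence with $R/\p\ne\{0\}$ is immediate and uses no unit.

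For the remaining clause I would observe that $(x+\p)(y+\p)=\p$ in $R/\p$ is the same statement as $xy\in\p$, while $x+\p=\p$ or $y+\p=\p$ is the same as $x\in\p$ or $y\in\p$. Therefore the implication ``$xy\in\p\Rightarrow x\in\p\text{ or }y\in\p$'' holds for all $x,y\in R$ if and only if ``$\bar x\,\bar y=0\Rightarrow\bar x=0\text{ or }\bar y=0$'' holds for all $\bar x,\bar y\in R/\p$, with surjectivity of the quotient map $R\to R/\p$ ensuring that every pair of elements of $R/\p$ arises from a pair in $R$. Combining the two translations shows that $\p$ is prime (proper together with the implication) if and only if $R/\p$ is nonzero and has no zero divisors, which is precisely the definition of a pseudodomain. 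I expect no genuine obstacle: the entire content is coset bookkeeping, and the single thing to watch is that properness is encoded as $\p\ne R$ rather than through a unit, which is dispatched in the second step.
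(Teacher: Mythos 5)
Your proposal is correct; the paper omits the proof entirely (it only remarks that the proposition ``is easy to prove''), and your coset-translation argument is exactly the standard one the author has in mind, with the right attention paid to the fact that properness is defined as $\p\ne R$ rather than via a unit. No gaps.
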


\begin{remark}
Let $R$ be a pseudoring and $\m$ a maximal ideal of $R$. Then $\m$ is not necessarily prime and $R/\m$ is not necessarily a pseudodomain, in particular, not necessarily a field. 
\end{remark}

\begin{example}
Let $R$ be the additive group $\Z_6$ with the all-products-zero multipliction. Then $\m_1=\{0,2,4\}$ and $\m_2=\{0,3\}$ are all the maximal ideals of $R$. None of them  is prime.
(For example, $1\cdot 1=0\in\m_1$, but $1\notin\m_1$.) Hence $R/\m_1$ is not a pseudodomain, in particular, not a field. Same for $R/\m_2$. We also have $\mathfrak{J}(R)=(0)$. As no ideal of $R$ is prime, we have $\mathfrak{N}(R)=R$. 

Similar things can happen even if $R$ is reduced. For example, in the subpseudoring $2\Z$ of $\Z$ the ideal $\m=4\Z$ is the only maximal ideal, but it is not prime (as $2\cdot 2=4\in\m$, but $2\notin\m$). In this case the pseudoring $R/\m$ has two elements and its underlying group is isomorphic to the group $\Z_2$, however it is equipped with the all-products-zero multiplication.
\end{example}

\begin{proposition}\label{quotient_field}
Let $R$ be a pseudoring and $\m$ a maximal ideal of $R$. If $\m$ is prime, then $R/\m$ is a field.
\end{proposition}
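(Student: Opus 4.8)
The plan is to pass to the quotient $S:=R/\m$ and prove directly that it is a field. Two facts get the argument started. First, since $\m$ is assumed prime, Proposition~\ref{quotient_pseudodomain} gives that $S$ is a pseudodomain; in particular $S\ne\{0\}$ and $S$ has no nonzero zero divisors. Second, since $\m$ is maximal, the usual correspondence between the ideals of $S$ and the ideals of $R$ containing $\m$ (this correspondence uses no identity element and so is valid verbatim for pseudorings) shows that the only ideals of $S$ are $(0)$ and $S$ itself.

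The crucial step, and the one that distinguishes the pseudoring setting from the ring setting, is to manufacture an identity element of $S$, since a pseudoring carries none a priori. Fix any nonzero $a\in S$ and consider $Sa=\{sa:s\in S\}$. One checks routinely that $Sa$ is an ideal of $S$: it is an additive subgroup and is stable under multiplication by elements of $S$. It is not the zero ideal, because $a^2=a\cdot a\ne 0$ (as $S$ has no zero divisors and $a\ne 0$), so $a^2\in Sa$ is a nonzero element. By the trivial ideal structure of $S$ noted above, we conclude $Sa=S$ for \emph{every} nonzero $a\in S$.

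From the equality $Sa=S$ I would then extract the identity and the inverses. Since $a\in S=Sa$, there is some $e\in S$ with $ea=a$. I claim $e$ is a multiplicative identity for $S$: given any $b\in S$, write $b=ca$ (possible since $Sa=S$); then $eb=e(ca)=c(ea)=ca=b$ by commutativity and associativity. Note $e\ne 0$, for otherwise $a=ea=0$. Finally, for invertibility: given a nonzero $a$, again $Sa=S\ni e$, so $e=a'a$ for some $a'\in S$, and since $e$ is the identity this exhibits $a'$ as the multiplicative inverse of $a$. Hence every nonzero element of $S$ is invertible and $S=R/\m$ is a field.

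The only genuinely delicate point is the one I singled out: in a pseudoring one cannot assume the existence of $1$, so the argument must produce it. The mechanism is that maximality forces the principal ideal $Sa$ to equal all of $S$, and this surjectivity is exactly what lets one solve $ea=a$ and then verify that $e$ acts as an identity on all of $S$. Once the identity is in hand, invertibility of nonzero elements follows immediately from the same equality $Sa=S$. Everything else — the ideal correspondence and the pseudodomain property — is routine.
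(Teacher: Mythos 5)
Your proposal is correct and follows essentially the same route as the paper: use maximality to force the principal ideal $Sa$ to be all of $S$ (nonzero because $a^2\ne 0$ in the pseudodomain $S$), extract an element $e$ with $ea=a$, verify it is an identity, and then obtain inverses from $e\in Sa$. The only cosmetic difference is that you check $e$ is an identity by writing an arbitrary $b$ as $ca$, whereas the paper uses cancellation in the pseudodomain; both are valid.
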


\begin{proof}
$R/\m$ is a non-zero pseudoring whose only ideals are $(0)$ and $R/\m$. Let $a\in R\setminus \m$. Then the ideal $R/\m.(a+\m)$ of $R/\m$ is $\ne (0)$ as $(a+\m)(a+\m)=a^2+\m\ne 0$ (since $a\notin \m$ and $\m$ is prime by assumption). Hence $R/\m.(a+\m)=R/\m$. Hence there exists an $e+\m\in R/\m$ such that $(e+\m)(a+\m)=a+\m$. We claim that $e+\m$ is an identity element of $R/\m$. Indeed, suppose that for some $x,y\in R$ we have $(e+\m)(x+\m)=y+\m$. Hence $(e+\m)(a+\m)(x+\m)=(a+\m)(y+\m)$, whence 
$(a+\m)(x-y+\m)=0$. As $R/\m$ is a pseudodomain (by Proposition \ref{quotient_pseudodomain}) and $a+\m\ne 0$, we have $x+\m=y+\m$. Thus $e+\m$ is an identity element of $R/\m$. Let $s+\m$ be any non-zero element of $R/\m$. Since $R/\m.(s+\m)=R/\m$, there is an $t+\m\in R/\m$ such that $(s+\m)(t+\m)=e+\m$. Hence $s+\m$ is invertible and so $R/\m$ is a field. 
\end{proof}

The next proposition is the slightly extended \cite[Theorem 2]{m}.

\begin{proposition}[{\cite[Theorem 2]{m}}]\label{charact_red}
Let $R$ be a pseudoring. The nil radical $\mathfrak{N}(R)$ consists precisely of all nilpotent elements of $R$.
In particular, $R$ is reduced if and only if $\mathfrak{N}(R)=(0)$.
\end{proposition}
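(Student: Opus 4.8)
The plan is to prove the set equality by establishing the two inclusions between $\mathfrak{N}(R)=\bigcap_{\p\in\mathrm{Spec}(R)}\p$ and the set of nilpotent elements, with the convention that an empty intersection of prime ideals equals $R$ (this is what forces $\mathfrak{N}(R)=R$ in the degenerate case where no prime ideals exist, as in the earlier $\Z_6$ example). For the inclusion that every nilpotent element lies in $\mathfrak{N}(R)$, I would first record the auxiliary fact that for any prime ideal $\p$ and any $n\ge 1$, $x^n\in\p$ forces $x\in\p$: this follows by induction on $n$, the case $n=1$ being trivial and the step $x\cdot x^{n-1}=x^n\in\p$ yielding $x\in\p$ or $x^{n-1}\in\p$ by primality, the latter handled by the induction hypothesis. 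If $x$ is nilpotent with $x^n=0$, then $x^n\in\p$ for every prime $\p$, hence $x\in\p$ for every prime $\p$, so $x\in\mathfrak{N}(R)$.

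For the reverse inclusion I would argue by contraposition, constructing from a non-nilpotent $x$ a prime ideal not containing $x$. Set $S=\{x^n:n\ge 1\}$; this is multiplicatively closed and, since $x$ is not nilpotent, $0\notin S$. Let $\Sigma$ be the family of ideals of $R$ disjoint from $S$, ordered by inclusion. It is non-empty since $(0)\in\Sigma$, and the union of any chain in $\Sigma$ is again an ideal disjoint from $S$, so Zorn's lemma provides a maximal element $\p\in\Sigma$. Because $x\in S$ while $\p\cap S=\emptyset$, we have $x\notin\p$ and in particular $\p\ne R$, so $\p$ is proper.

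It remains to prove that $\p$ is prime, and this is the step where the passage from rings to pseudorings demands care. Suppose $ab\in\p$ with $a\notin\p$ and $b\notin\p$. Then the ideals $\p+(a)$ and $\p+(b)$ properly contain $\p$, so by maximality each must meet $S$: there are $m,n\ge 1$ with $x^m\in\p+(a)$ and $x^n\in\p+(b)$. Since in a pseudoring the ideal generated by $a$ is $Ra+\Z a$ rather than $Ra$, I would write $x^m=p_1+\alpha$ and $x^n=p_2+\beta$ with $p_1,p_2\in\p$, $\alpha\in Ra+\Z a$, and $\beta\in Rb+\Z b$. Expanding $x^{m+n}=(p_1+\alpha)(p_2+\beta)$, every term containing $p_1$ or $p_2$ lies in $\p$ because $\p$ is an ideal, while the remaining product $\alpha\beta$ expands into four summands, each of which is a ring multiple or an integer multiple of $ab$; since $ab\in\p$ and $\p$ is closed under both ring multiplication and integer multiples, $\alpha\beta\in\p$ as well. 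Hence $x^{m+n}\in\p\cap S$, contradicting $\p\cap S=\emptyset$. Therefore no such $a,b$ exist, $\p$ is prime, and it avoids $x$, so $x\notin\mathfrak{N}(R)$.

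The main obstacle is exactly this primality verification: in the unital case the generators of $(a)$ and $(b)$ are ring multiples of $a$ and $b$, whereas here they carry integer-multiple tails, and one must confirm that the cross product $\alpha\beta$ still decomposes entirely into multiples of $ab$ so as to stay inside $\p$. Once the equality is established, the ``in particular'' statement is immediate: $R$ is reduced precisely when its only nilpotent element is $0$, which by the main equality is equivalent to $\mathfrak{N}(R)=(0)$.
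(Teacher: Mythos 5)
Your proposal is correct, and it is the standard argument the paper itself relies on: the paper gives no proof of this proposition (it defers to McCoy's Theorem~2), but elsewhere it uses exactly your construction (``let $\q$ be an ideal maximal with respect to $\q\cap S=\emptyset$\dots\ by a standard argument $\q$ is prime''). You also correctly handle the one genuinely pseudoring-specific point, namely that $(a)=Ra+\Z a$, by checking that all four summands of $\alpha\beta$ are ring or integer multiples of $ab$ and hence lie in $\p$.
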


\begin{proposition}[{\cite[page 492]{m}}]\label{no_nilp}
Let $R$ be a pseudoring and $x\in R\setminus \{0\}$. If $x$ is von Neumann invertible, it is not nilpotent.
\end{proposition}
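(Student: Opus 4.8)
The plan is to prove the contrapositive in the form: if $x$ is von Neumann invertible and nilpotent, then $x=0$. The main tool will be the idempotent $e(x)=xx^{(-1)}$, together with the fact from Proposition \ref{idem_tools}(f) that $e(x)=0$ forces $x=0$. So it suffices to show that nilpotency of $x$ drags $e(x)$ down to $0$.

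To this end, first I would record the identity $x\cdot e(x)=x$, which is immediate from axiom (9): indeed $x\cdot e(x)=x(xx^{(-1)})=xxx^{(-1)}=x$. The heart of the argument is then the claim that
\[
e(x)=x^n(x^{(-1)})^n\quad\text{for every } n\ge 1.
\]
I would prove this by induction on $n$. The base case $n=1$ is the definition of $e(x)$. For the inductive step, assuming $e(x)=x^n(x^{(-1)})^n$, I would compute, using commutativity and associativity, $x^{n+1}(x^{(-1)})^{n+1}=x\bigl(x^n(x^{(-1)})^n\bigr)x^{(-1)}=x\,e(x)\,x^{(-1)}$, and then collapse $x\,e(x)=x$ by the identity just recorded, obtaining $xx^{(-1)}=e(x)$. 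This closes the induction.

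With the claim in hand the conclusion is immediate: if $x$ is nilpotent, say $x^N=0$ for some $N\ge 1$, then taking $n=N$ gives $e(x)=x^N(x^{(-1)})^N=0\cdot(x^{(-1)})^N=0$, whence $x=0$ by Proposition \ref{idem_tools}(f). This contradicts $x\neq 0$, so a nonzero von Neumann invertible element cannot be nilpotent.

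The only genuine care needed --- and what I expect to be the single point where the absence of an identity matters --- is the inductive identity: one cannot invoke a multiplicative unit or a cancellation law, so every reduction must be performed by explicit rewriting using only axioms (1)--(9), and in particular the step $x\,e(x)=x$ must be derived from axiom (9) rather than assumed. An alternative route avoids $e(x)$ entirely: by Proposition \ref{vNi_first} one has $x=rx^2$ for some $r\in R$, and a parallel induction yields $x=r^{n}x^{n+1}$ for all $n\ge 1$; if $x^{N}=0$ then choosing $n=N-1$ (and treating $N=1$ directly) gives $x=0$. Either version is short once the correct identity is isolated.
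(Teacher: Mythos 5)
Your proof is correct, but it runs in the opposite direction from the paper's. The paper argues by minimal counterexample: it takes the smallest $n\ge 2$ with $x^n=0$ and rewrites $0=x^{n-2}x^2x^{(-1)}=x^{n-2}x=x^{n-1}$, contradicting minimality in a single line (with a slight notational abuse at $n=2$, where $x^0$ does not exist in a pseudoring and one must instead read the identity as $x^2x^{(-1)}=x$ directly). You instead build upward, establishing $e(x)=x^n(x^{(-1)})^n$ for all $n$ by induction from $x\,e(x)=x$, and then kill $e(x)$ in one stroke once $x^N=0$, concluding via Proposition \ref{idem_tools}(f). Both arguments exploit the same engine, namely the relation $x=x\cdot xx^{(-1)}$ propagating nilpotency; the paper's version is shorter, while yours avoids the $x^0$ edge case entirely and isolates a reusable identity for $e(x)$. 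Your alternative route via $x=r^nx^{n+1}$ from Proposition \ref{vNi_first} is likewise valid and equally self-contained. No gaps.
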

 
\begin{proof}
Suppose $x$ is nilpotent and let $n\ge 2$ be the smallest number such that $x^n=0$. Then $x^{n-2}x^2x^{(-1)}=0$, hence $x^{n-1}=0$, a contradiction.
\end{proof}

A nonempty subset $S$ of a pseudoring $R$ is called a {\it multiplicatively closed set} (or said to be {\it multiplicatively closed}) if $s_1, s_2\in S$ implies $s_1s_2\in S$.
The localization $S^{-1}R$ is defined as for the rings. $S^{-1}R$ is a ring as the element $s/s$ $(s\in S)$ is the identity element. As usual,  $S^{-1}R$ is denoted $R_\p$ when $S=R\setminus \p$ for a prime ideal $\p$ of $R$. If $I$ is an ideal of $R$, we define
\begin{equation*}
I_\p=\{a/s\;:\; a\in I, s\in R\setminus\p\}.
\end{equation*}
$I_\p$ is an ideal of $R_\p$, called the {\it localization of the ideal $I$ at $\p$}. We have $(I+J)_\p=I_\p+J_\p$. If $I$ is contained in $\p$, the $\p/I$ is a prime ideal of $R/I$ and the canonical map $f:(R/I)_{\p/I}\to R_\p/I_\p$, given by $\displaystyle{f(\frac{r+I}{s+I})=\frac{r}{s}+I_\p}$, is an isomorphism. If $J\supseteq I$, then under this map $f$ we have $\displaystyle{f((J/I)_{\p/I})=J_\p/I_\p}$. Note also that $(R/I)\setminus (\p/I)=\{s+I\;:\;s\in R\setminus \p\}$. The prime ideals of $R_\p$ are precisely the ideals $\q_\p$, where $\q$ is a prime ideal of $R$ contained in $\p$, and if $\q\ne \q'$, then $\q_\p\ne\q'_\p$. 

%-----------------------------------
\section{Pseudomeadows with finitely many idempotents}

\begin{example}\label{examples_with_inf_min_idem}
{\it A pseudomeadow with infinitely many minimal idempotents can be with, as well as without, an identity ellement.}

Indeed, in the pseudoring $\F_2^{(\N)}$, consisting of all sequences $x=(x_1, x_2, x_3,\dots)$ of elements of $\F_2$ with finite support, every element is an idempotent, so that this ring is a pseudomeadow. For each $n\ge 1$ let $e(n)$ be the sequence whose $n$-th component is $1$ and all other components are $0$. The elements $e(n)$, $n\in\N$, are precisely all the minimal idempotents of $\F_2^{(\N)}$. This pseudomeadow does not have an identity element.

 Similarly, in the ring $\F_2^\N$, consisting of all sequences $x=(x_1, x_2, x_3,\dots)$ of elements of $\F_2$, every element is an idempotent, so this ring is a meadow. The elements $e(n)$, $n\in\N$, are again precisely all the minimal idempotents (but in this case there is an identity element).
\end{example}

\begin{example}\label{meadow_no_min_idem}
{\it There is a non-zero meadow without minimal idempotents.}

Indeed, let $E=[0,1)$ with addition $x+y=\max\{x,y\}$. Then $E$ is a monoid. Consider the monoid ring $M=\F_2[X;E]$. We assume that each element of $M$ is written in the form $f=X^{\alpha_1}+\dots+X^{\alpha_n}$ with $n\ge 0$ and $0\le \alpha_1<\dots <\alpha_n<1$. The addition and multiplication in $M$ are defined in the standard way (analogously to the operations with polynomials). We first claim that every element $f\in M$ is an idempotent and we prove this by induction on the number $n$ of terms of $f$. The cases $n=0$ and $n=1$ are obvious. Suppose the claim is true for $n$ terms . Let $f=X^{\alpha_1}+\dots+X^{\alpha_n}+X^{\alpha_{n+1}}$. Then (using the inductive hypothesis and the characteristic $2$ of $M$) $f^2=(X^{\alpha_1}+\dots+X^{\alpha_n})(X^{\alpha_1}+\dots+X^{\alpha_n})+X^{\alpha_{n+1}}(X^{\alpha_1}+\dots+X^{\alpha_n})+(X^{\alpha_1}+\dots+X^{\alpha_n})X^{\alpha_{n+1}}+X^{\alpha_{n+1}}=X^{\alpha_1}+\dots+X^{\alpha_n}+X^{\alpha_{n+1}}=f$. The claim is proved. In particular, $M$ is a meadow.

We now prove that $M$ has no minimal idempotents. Let $f=X^{\alpha_1}+\dots+X^{\alpha_n}\ne 0$. Consider two cases. \underbar{First case: $n$ odd.} Let $\beta>\alpha_n$. Then $g=X^\beta$ is a smaller idempotent than $f$ as $fg=(X^{\alpha_1}+\dots+X^{\alpha_n})X^\beta=X^\beta=g$. \underbar{Second case: $n\ne 0$ even.} Let $\beta$ be such that $\alpha_{n-1}<\beta<\alpha_n$ and let $g=X^\beta+X^{\alpha_n}$. Then $g=X^\beta$ is a smaller idempotent than $f$ as 
\begin{align*}
fg&=(X^{\alpha_1}+\dots+X^{\alpha_n})(X^\beta+X^{\alpha_n})\\
   &=(\underbrace{X^\beta+\dots+X^\beta}_{n-1}+X^{\alpha_n})+(\underbrace{X^{\alpha_n}+\dots+X^{\alpha_n}}_{n})\\
   &=X^\beta+X^{\alpha_n}\\
   &=g.
\end{align*}
\end{example}

\begin{proposition}\label{1_sum_some_min_idem}
Let $M$ be a meadow in which $1=e_1+\dots+e_n$ for some minimal idempotents $e_1, \dots, e_n$ of $M$. Then $e_1, \dots, e_n$ are all minimal idempotents of $M$ and they are pairwise distinct. Moreover, $M$ has finitely many idempotents.
\end{proposition}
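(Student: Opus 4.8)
The plan is to work with the distinct values occurring in the given sum and to lean on the orthogonality of minimal idempotents from Proposition \ref{idem_tools}. Let $f_1,\dots,f_k$ be the distinct elements among $e_1,\dots,e_n$, and for each $a$ let $\mu_a\ge 1$ be the number of indices $i$ with $e_i=f_a$, so that $1=\sum_{a=1}^k\mu_a f_a$. By Proposition \ref{idem_tools}(c) distinct minimal idempotents are orthogonal, hence $f_af_b=0$ for $a\ne b$, and by part (e), applied inductively, $g:=f_1+\dots+f_k$ is an idempotent. The first key point is that $g=1$: since $1$ is the identity of $M$ we have $g=g\cdot 1=\sum_a\mu_a(gf_a)$, and orthogonality gives $gf_a=f_a$ for every $a$, so $g=\sum_a\mu_a f_a=1$. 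Thus $1=f_1+\dots+f_k$ is a representation of $1$ as a sum of pairwise orthogonal minimal idempotents.

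Next I would show that $f_1,\dots,f_k$ are exactly the minimal idempotents of $M$, and deduce finiteness of $\EC(M)$. For an arbitrary idempotent $e$, multiplying by $1$ gives $e=\sum_a ef_a$, where each $ef_a$ is an idempotent (part (b)) with $ef_a\le f_a$; minimality of $f_a$ forces $ef_a\in\{0,f_a\}$. Hence $e=\sum_{a\in T}f_a$ for the subset $T=\{a:ef_a=f_a\}$, which shows $|\EC(M)|\le 2^k$ and settles the ``finitely many idempotents'' assertion. If moreover $e$ is a minimal idempotent, then $e\ne 0$ forces some $ef_{a_0}=f_{a_0}\ne 0$, and since also $ef_{a_0}\le e$ with $e$ minimal we get $ef_{a_0}=e$, whence $e=f_{a_0}$. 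Therefore the set of minimal idempotents of $M$ is precisely $\{f_1,\dots,f_k\}=\{e_1,\dots,e_n\}$.

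The step I expect to be the main obstacle is the assertion that the listed $e_1,\dots,e_n$ are pairwise distinct, i.e. that every multiplicity $\mu_a$ equals $1$. Multiplying $1=\sum_a\mu_a f_a$ by $f_b$ and using orthogonality gives $(\mu_b-1)f_b=0$, which in the field $Mf_b$ (Proposition \ref{idem_tools}(h)) reads $(\mu_b-1)\cdot 1_{Mf_b}=0$; thus the characteristic of $Mf_b$ must divide $\mu_b-1$. In characteristic $0$ this immediately yields $\mu_b=1$ and distinctness follows, but in positive characteristic the computation alone permits $\mu_b\equiv 1$ modulo the characteristic (for instance $1=a+a+a+b$ with $a=(1,0)$ and $b=(0,1)$ in the meadow $\F_2\times\F_2$). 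Converting $(\mu_b-1)f_b=0$ into genuine distinctness is therefore the delicate point, and this is where I would expect either an additional hypothesis on the $e_i$ or a normalisation to the distinct orthogonal representation $1=f_1+\dots+f_k$ obtained above to be needed.
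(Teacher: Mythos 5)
Your argument for the first and third assertions is correct and essentially matches the paper's: the paper eliminates an alleged extra minimal idempotent $e_{n+1}$ via $e_{n+1}=e_{n+1}\cdot 1=e_{n+1}(e_1+\dots+e_n)=0$ (orthogonality), and obtains finiteness of $\EC(M)$ by expanding $e=e\cdot 1=\sum_i ee_i$ with each $ee_i\in\{0,e_i\}$. Your preliminary passage to the distinct elements $f_1,\dots,f_k$, together with the check that $f_1+\dots+f_k=1$, is a harmless normalization that makes these two steps go through verbatim.

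You are also right to balk at the distinctness claim, and your counterexample settles the matter: in the meadow $\F_2\times\F_2$ one has $1=(1,1)=a+a+a+b$ with $a=(1,0)$ and $b=(0,1)$ minimal idempotents, so the clause ``they are pairwise distinct'' is false as stated. The paper's own proof of that clause contains a slip: starting from the minimal $r\ge 2$ with $re_i=0$, it writes $1=1-(e_i+\dots+e_i)$ ($r$ summands), multiplies by $e_i$ to get $e_i=e_i-(e_i+\dots+e_i)$ ($r$ summands), and then asserts $0=e_i+\dots+e_i$ ($r-1$ summands); but that identity only returns $re_i=0$ again --- precisely the congruence $\mu_b\equiv 1 \pmod{\operatorname{char}(Mf_b)}$ you derived, which is no contradiction (in your example $r=2$ and $2a=0$). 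So your diagnosis is the correct one: distinctness requires either an extra hypothesis (e.g.\ that each field $Me_i$ has characteristic $0$, in which case your relation $(\mu_b-1)f_b=0$ does force $\mu_b=1$) or a restatement for the normalized sum $1=f_1+\dots+f_k$ of distinct minimal idempotents. This does not propagate further, since Theorem \ref{pseudomeadow_finitely_many_min_idem} takes the minimal idempotents distinct from the outset and does not invoke Proposition \ref{1_sum_some_min_idem}.
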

\begin{proof}
Suppose there is a minimal idempotent $e_{n+1}$ different than $e_1, \dots, e_n$. Then $e_{n+1}=e_{n+1}(e_1+\dots+e_n)$, hence (since minimal idempotents are pairwise orthogonal) $e_{n+1}=0$, a contradiction. Thus $e_1, \dots, e_n$ are all minimal idempotents of $M$. They are pairwise distinct, otherwise if $e_i$ appears $t\ge 2$ times in $1=e_1+\dots+e_n$, we would have (after multiplying both sides of this equality by $e_i$) $e_i=e_i+\dots+e_i$ ($t$ times), hence $0=e_i+\dots+e_i$ ($t-1$ times). Let $r\ge 2$ be the smallest number such that $0=e_i+\dots+e_i$ ($r$ times). Then $1=1-(\underbrace{e_i+\dots+e_i}_{r})$, hence (after multiplying each side by $e_i$) $e_i=e_i-(\underbrace{e_i+\dots+e_i}_{r})$, hence $0=e_i+\dots+e_i$ ($r-1$ times), a contradiction.

 Let now $e$ be any idempotent of $M$. Since $ee_i\cdot e_i=ee_i$, we have that the idempotent $ee_i$ is $\le e_i$ for every $i=1,\dots, n$.  Hence for every $i=1,\dots, n$, $ee_i$ is either $0$ or $e_i$ (as the $e_i$ are minimal idempotents). Now
\begin{align*}
e&=e\cdot 1\\
  &=e(e_1+\dots+e_n)\\
  &=ee_1+\dots+ee_n\\
  &=e_{i_1}+\dots+e_{i_k}
\end{align*}
for some distinct $i_1, i_2, \dots,i_k$ from $\{1,\dots,n\}$. Here $k\ge 0$. (If $k=0$, then $e=0$, otherwise $e\ne 0$.)
Since each idempotent of $M$ has this form, $M$ has finitely many idempotents.
\end{proof}

The next theorem is a generalization of \cite[Lemma 3.6 and Theorem 3.7]{brs} to the case of pseudomeadows, which, moreover, are not necessarily finite.

\begin{theorem}\label{pseudomeadow_finitely_many_min_idem}
Let $M$ be a pseudomeadow with finitely many minimal idempotents and suppose that for every non-zero idempotent $e\in M$ there exists a mini\-mal idempotent $e'\in M$ such that $e'\le e$. Then:

(a) $M$ is a meadow whose identity element is equal to the sum of all minimal idempotents of $M$.

(b) Moreover, $M$ is isomorphic to a finite product of fields, namely $M\cong Me_1\times \cdots\times Me_n$, where $\{e_1, \dots, e_n\}$ is the set of all minimal idempotents of $M$.

(c) $M$ has finitely many idempotents and each of them is a sum of distinct minimal idempotents.
\end{theorem}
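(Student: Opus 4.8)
The plan is to let $\{e_1,\dots,e_n\}$ be the set of all minimal idempotents of $M$ and to set $e=e_1+\dots+e_n$. Since any two minimal idempotents are orthogonal by Proposition \ref{idem_tools}(c), repeated application of Proposition \ref{idem_tools}(e) shows that $e$ is an idempotent. The entire content of part (a) is then the claim that $e$ is the identity element of $M$: once this is established, $M$ is a meadow by definition with $1=e=e_1+\dots+e_n$, and part (c) follows immediately by invoking Proposition \ref{1_sum_some_min_idem} on this representation of $1$. So the real work is concentrated in showing $xe=x$ for all $x\in M$, and I would do this in two stages, first for idempotents and then for arbitrary elements.

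First I would prove that $fe=f$ for every idempotent $f\in M$. The key observation is that $f-fe$ is again an idempotent: expanding $(f-fe)^2=f^2-f\cdot fe-fe\cdot f+(fe)^2$ and using $f^2=f$ and $e^2=e$ collapses everything to $f-fe$. Now suppose $f-fe\neq 0$. By the hypothesis there is a minimal idempotent $e_j$ with $e_j\le f-fe$, i.e.\ $e_j(f-fe)=e_j$. On the other hand $e_je=e_j$ (because $e_je_i=0$ for $i\neq j$, so $e_je=e_j^2=e_j$), hence $e_jfe=f(e_je)=fe_j=e_jf$ and therefore $e_j(f-fe)=e_jf-e_jf=0$, contradicting $e_j\neq 0$. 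Thus $f-fe=0$. This idempotent step is where the domination hypothesis does its work, and I expect it to be the main obstacle, since it is the only place the assumption enters and it is what fails in Example \ref{meadow_no_min_idem}.

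Passing from idempotents to arbitrary elements is then short, via the identity $x=x\,e(x)$, which follows from axiom (9): $x=x^2x^{(-1)}=x\,(xx^{(-1)})=x\,e(x)$. Since $e(x)=xx^{(-1)}$ is an idempotent by Proposition \ref{idem_tools}(f), the previous step gives $e(x)\,e=e(x)$, and consequently $xe=x\,e(x)\,e=x\,e(x)=x$ for every $x\in M$. Hence $e$ is the identity element of $M$, proving (a).

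For part (b), each $Me_i$ is a field with identity $e_i$ by Proposition \ref{idem_tools}(h). I would define $\varphi:M\to Me_1\times\dots\times Me_n$ by $\varphi(x)=(xe_1,\dots,xe_n)$; that this is a homomorphism is routine, the multiplicative part using $e_i^2=e_i$. Injectivity follows at once from $xe=x$: if every $xe_i=0$, then $x=xe=\sum_i xe_i=0$. For surjectivity, given a tuple whose $j$-th entry is $y_je_j$, I would take $x=\sum_i y_ie_i$ and compute, using orthogonality $e_ie_j=0$ for $i\neq j$ together with $e_j^2=e_j$, that $xe_j=y_je_j$. Therefore $\varphi$ is an isomorphism of $M$ onto the finite product of fields $Me_1\times\dots\times Me_n$, completing (b).
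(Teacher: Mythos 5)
Your proposal is correct and takes essentially the same approach as the paper: the core step in both is showing $f-(e_1+\dots+e_n)f=0$ for every idempotent $f$ by producing a minimal idempotent below it and deriving a contradiction from orthogonality, then passing to general $x$ via the idempotent $e(x)=xx^{(-1)}$, with the same map $x\mapsto(xe_1,\dots,xe_n)$ for (b). The only divergence is in (c), where you invoke Proposition \ref{1_sum_some_min_idem} directly on the representation $1=e_1+\dots+e_n$, while the paper transfers the description of idempotents through the isomorphism of (b) via Proposition \ref{idem_preservation}; both are valid and equally short.
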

\begin{proof}
(a) Let $M$ be a pseudomeadow satisfying the conditions from the statement. If $M=\{0\}$, the statement is true, so we assume from now on that $M\ne \{0\}$. If $x$ is any non-zero element of $M$, then $xx^{(-1)}$ is an idempotent of $M$, different than $0$. Hence $M$ contains at least one non-zero idempotent. Since by assumption there is a minimal idempotent $e'$ such that $e'\le xx^{(-1)}$, $M$ has at least one minimal idempotent. Let $e_1, \dots, e_n$ ($n\ge 1$) be all distinct minimal idempotents of $M$. We will show that $e_1+\dots+e_n$ is an identity element of $M$. 

Let $e$ be any idempotent of $M$. Since minimal idempotents are pairwise orthogonal, $e_1+\dots+e_n$ is an idempotent of $M$, hence $(e_1+\dots+e_n)e$ is an idempotent of $M$ and $(e_1+\dots+e_n)e\le e$. Hence $e-(e_1+\dots+e_n)e$ is an idempotent of $M$. Suppose $e-(e_1+\dots+e_n)e\ne 0$. By the assumption there is a minimal idempotent, say $e_i$, such that $e_i\le e-(e_1+\dots+e_n)e$. Hence $e_i[e-(e_1+\dots+e_n)e]=e_i$. But the left-hand side of this equality is equal to $0$, hence $e_i=0$, a contradiction. Thus 
\begin{equation}\label{eq1}
e=(e_1+\dots+e_n)e
\end{equation}
for every idempotent $e$.

Let $x$ be any element of $M$. Then $xx^{(-1)}$ is an idempotent of $M$, so that, by (\ref{eq1}), $xx^{(-1)}(e_1+\dots+e_n)=xx^{(-1)}$. Hence $xxx^{(-1)}(e_1+\dots+e_n)=xxx^{(-1)}$, i.e., $x(e_1+\dots+e_n)=x$. Thus $e_1+\dots+e_n$ is an identity element of $M$. 

(b) By Proposition \ref{idem_tools} each $Me_i$ is a field with the identity element $e_i$, so that the element $(e_1,\dots, e_n)$ is the identity element of the meadow $Me_1\times \cdots\times Me_n$. We define a map $h:M\to Me_1\times\cdots\times Me_n$ by
\begin{equation*}
h(x)=(xe_1,\dots, xe_n) \text{ for every $x\in M$}.
\end{equation*}
This map is a ring homorphism since it is easy to see that it is a pseudoring homorphism and since it satisfies $h(e_1+\dots+e_n)=((e_1+\dots+e_n)e_1,\dots, (e_1+\dots+e_n)e_n)=(e_1,\dots, e_n)$. It is injective since $h(x)=h(y)$ implies $xe_1=ye_1$,\dots, $xe_n=ye_n$, hence $x(e_1+\dots+e_n)=y(e_1+\dots+y_n)$, hence $x=y$. Finally, it is surjective since for any $(x_1e_1,\dots, x_ne_n)\in Me_1\times\cdots\times Me_n$ there is an element $x\in M$, namely $x=x_1e_1+\dots+x_ne_n$, such that $h(x)=(x_1e_1,\dots, x_ne_n)$. Thus $h$ is an isomorphism of the meadows $M$ and $Me_1\times\cdots\times Me_n$.

(c) The minimal idempotents in $Me_1\times\cdots\times Me_n$ are $(e_1,0,\dots,0), (0,e_2,$ $0,\dots,0),\dots, (0,0,\dots, e_n)$ and every non-zero idempotent of $Me_1\times\cdots\times Me_n$ is a sum of distinct minimal idempotents . Hence by Proposition \ref{idem_preservation} and the part (b), every non-zero idempotent of $M$ is a sum of distinct minimal idempotents of $M$.
\end{proof}

Every infinite field is an infinite meadow with finitely many idempotents. There the sum of minimal idempotents is equal to $1$. However, we have the following example.

\begin{example}\label{meadow_sum_min_idem_not_1}
{\it There is a meadow which has a finite non-zero number of minimal idempotents, in which the sum of all minimal idempotents is not equal to $1$.}

Indeed, let $E=[0,1]$ with addition $x+y=\max\{x,y\}$. Then $E$ is a monoid. Consider the monoid ring $M=\F_2[X;E]$. We assume that each element of $M$ is written in the form $f=X^{\alpha_1}+\dots+X^{\alpha_n}$ with $n\ge 0$ and $0\le \alpha_1<\dots <\alpha_n\le1$. The addition and multiplication in $M$ are defined in the standard way (analogously to the operations with polynomials). Similarly as in Example \ref{meadow_no_min_idem} one can show that all the elements of $M$ are idempotents. In particular, $M$ is a meadow. We claim that $X^1$  is the only minimal idempotent of $M$. Let us first show the minimality. We have:
\[(X^{\alpha_1}+\dots+X^{\alpha_n})\cdot X^1=nX^1=\begin{cases} X^1, & \text{if $n$ is odd,}\\ 0, & \text{if $n$ is even.}\end{cases}\]
Hence $X^1$ is a minimal idempotent. Now we show that no other element of $M$ is a minimal idempotent. Suppose to the contrary. Let $f=X^{\alpha_1}+\dots+X^{\alpha_n}$ be a minimal idempotent and $f\ne X^1$. If $n$ is odd, then $f\cdot X^1=X^1$, a contradiction. If $n\ge 2$ is even, then for $\alpha_{n-1}<\beta<\alpha_n$ we have $f\cdot (X^\beta+X^{\alpha_n})=(n-1)X^\beta+X....^{\alpha_n}+nX^{\alpha_n}=X^\beta+X^{\alpha_n}$, a contradiction. Thus $X^1$ is the only minimal idempotent of $M$ and it is not equal to $1$. Our argument also proves that for any idempotent $f=X^{\alpha_1}+\dots+X^{\alpha_n}$ with $n\ge 2$ even there is no minimal idempotent $f'$ such that $f'\le f$.
\end{example}

\begin{corollary}\label{pseudomeadow_finitely_many_idem}
Let $M$ be a pseudomeadow with finitely many idempotents.

(a) $M$ has an identity element. It is equal to the sum of all minimal idempotents of $M$.

(b) If $\{e_1,\dots, e_n\}$ is the set of all minimal  idempotents of $M$, then $M\cong Me_1\times\dots\times Me_n$.
\end{corollary}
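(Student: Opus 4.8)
The plan is to derive this as an immediate consequence of Theorem \ref{pseudomeadow_finitely_many_min_idem}: I would show that a pseudomeadow $M$ with finitely many idempotents satisfies both hypotheses of that theorem, and then read off its conclusions. Since every minimal idempotent is in particular an idempotent, the finiteness of $\EC(M)$ at once gives that $M$ has only finitely many minimal idempotents, so the first hypothesis holds for free.

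The substantive step is to verify the second hypothesis, namely that for every non-zero idempotent $e\in M$ there is a minimal idempotent $e'$ of $M$ with $e'\le e$. Here I would consider the set $S=\{f\in\EC(M):f\ne 0 \text{ and } f\le e\}$. This set is non-empty, since $e\in S$, and finite, since $\EC(M)$ is finite, so it has a $\le$-minimal element $e'$. I would then claim that $e'$ is in fact a minimal idempotent of $M$, i.e.\ that no non-zero idempotent of $M$ lies strictly below $e'$ in all of $\EC(M)$, not merely in $S$.

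The only delicate point, and the step I expect to be the crux of the argument, is this claim. Its verification rests on transitivity of the order: if $g$ is a non-zero idempotent with $g\le e'$, then from $ge'=g$ and $e'e=e'$ one computes $ge=(ge')e=g(e'e)=ge'=g$, so that $g\le e$ and hence $g\in S$; minimality of $e'$ in $S$ then forces $g=e'$. Thus any non-zero idempotent below $e'$ equals $e'$, so $e'$ is a genuine minimal idempotent of $M$ lying below $e$, and the second hypothesis is established.

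With both hypotheses in hand, Theorem \ref{pseudomeadow_finitely_many_min_idem} applies directly. Its part (a) gives that $M$ has an identity element equal to the sum of all minimal idempotents of $M$, which is statement (a) here; and its part (b) gives $M\cong Me_1\times\dots\times Me_n$, where $\{e_1,\dots,e_n\}$ is the set of all minimal idempotents, which is statement (b). Everything beyond the transitivity observation is a direct citation of the theorem.
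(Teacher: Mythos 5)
Your proof is correct and follows the same route as the paper, which simply states that the corollary ``immediately follows'' from Theorem \ref{pseudomeadow_finitely_many_min_idem}. You have merely made explicit the small verification the paper leaves implicit --- that in a finite set of idempotents every non-zero idempotent has a minimal idempotent below it, via transitivity of $\le$ --- and that argument is sound.
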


\begin{proof}
This immediately follows from the previous theorem.
\end{proof}

The statement analogous to this corollary, but for pseudorings, is not true, as we can see in the next simple example.

\begin{example}\label{pseudoring_finitely_many_idem}
{\it There is a pseudoring with finitely many idempotents, which does not have an identity element.}

Indeed, the simplest example is $R=2\Z$ with the operations induced from $\Z$. If we want to have at least one non-zero idempotent (so that we have at least one minimal idempotent), we can take $R=2\Z\times \Z$, or $R=2\Z\times \Z\times \Z$ (to have all kinds of idempotents). In $R=2\Z\times\Z\times\Z$ we have two minimal idempotents, namely $(0,1,0)$ and $(0,0,1)$, and two idempotents that are not minimal: $(0,0,0)$ and $(0,1,1)$. The sum of minimal idempotents is $(0,1,1)$, which is not an identity element of $R$. In particular, $R$ is not a pseudomeadow (by Corollary \ref{pseudomeadow_finitely_many_idem}, but it is also easy to check directly). Hence Theorem \ref{pseudomeadow_finitely_many_min_idem} cannot be applied even though for each non-zero idempotent $e$ there is a minimal idempotent $e'$ such that $e'\le e$.
\end{example}

\begin{corollary}[{\cite[Lemma 3.6 and Theorem 3.7]{brs}}]\label{BRS_main}
Let $M$ be a finite meadow and $\{e_1,\dots, e_n\}$ the set of all minimal  idempotents of $M$. Then:

(a) $e_1+\dots+e_n=1$;

(b) $M\cong Me_1\times\dots\times Me_n$.
\end{corollary}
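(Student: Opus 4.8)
The plan is to read this statement off as the special case of Corollary~\ref{pseudomeadow_finitely_many_idem} in which the ambient pseudomeadow already possesses an identity element, so that almost all of the work has already been done upstream.

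First I would observe that a finite meadow $M$ is, in particular, a finite pseudomeadow, and a finite set can contain only finitely many idempotents; hence $M$ is a pseudomeadow with finitely many idempotents and Corollary~\ref{pseudomeadow_finitely_many_idem} applies verbatim. (Were one to invoke Theorem~\ref{pseudomeadow_finitely_many_min_idem} directly instead, the sole hypothesis needing attention would be that every non-zero idempotent $e$ dominates a minimal idempotent; this is forced by finiteness, since the non-empty finite poset $\{f\in\EC(M):f\ne 0,\ f\le e\}$ has a minimal element $f$, and any such $f$ is a minimal idempotent of $M$, because a non-zero $h\le f$ would again lie in that set and so satisfy $h=f$.)

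Corollary~\ref{pseudomeadow_finitely_many_idem}(b) then yields the isomorphism $M\cong Me_1\times\dots\times Me_n$ immediately, which is exactly part~(b). For part~(a), Corollary~\ref{pseudomeadow_finitely_many_idem}(a) asserts that $e_1+\dots+e_n$ is \emph{an} identity element of $M$. The one step where being a meadow rather than a bare pseudomeadow is actually used is the observation that $M$ already carries its given identity $1$ and that the identity element of a ring (equivalently, of a meadow) is unique; combining these forces $e_1+\dots+e_n=1$.

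I do not expect any genuine obstacle: the substantive content has been front-loaded into Theorem~\ref{pseudomeadow_finitely_many_min_idem} and its corollary, and what remains is essentially bookkeeping. The most delicate point is merely to identify the identity produced abstractly by the corollary with the prescribed $1$ via uniqueness, rather than to carry out any new argument.
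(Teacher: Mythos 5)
Your proposal is correct and matches the paper's own proof, which likewise deduces the statement immediately from Corollary~\ref{pseudomeadow_finitely_many_idem} by noting that a finite meadow is a pseudomeadow with finitely many idempotents. The extra remarks about uniqueness of the identity and about verifying the hypotheses of Theorem~\ref{pseudomeadow_finitely_many_min_idem} via finiteness are sound but not needed beyond what the paper already records.
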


\begin{proof}
This immediately follows from the previous corollary.
\end{proof}

%---------------------------------------------
\section{Characterizations of general pseudomeadows}

\begin{proposition}\label{pseudoring_over_p}
Let $R$ be a pseudoring and  $\p$ be a prime ideal of $R$. If there exists a von Neumann invertible element $x\in R\setminus \p$, then $R/\p$ is a domain and $e(x)+\p$ is its identity element.
\end{proposition}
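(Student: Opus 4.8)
The plan is to exploit the fact that primeness of $\p$ already delivers, via Proposition~\ref{quotient_pseudodomain}, that $R/\p$ is a pseudodomain; the only ingredient missing for $R/\p$ to be a genuine domain is an identity element, and the natural candidate is $e(x)+\p$. So I would split the argument into two tasks: first, exhibit $e(x)+\p$ as a \emph{nonzero} idempotent of $R/\p$; second, show that in any pseudodomain a nonzero idempotent is forced to act as the identity.

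For the first task, recall that $e(x)=xx^{(-1)}$ is an idempotent of $R$ by Proposition~\ref{idem_tools}(f). The key identity is $x\,e(x)=xxx^{(-1)}=x$, which is just axiom (9). Since $\p$ is an ideal and $x\notin\p$, I cannot have $e(x)\in\p$: otherwise $x=x\,e(x)\in\p$, a contradiction. Hence $e(x)+\p\neq 0$ in $R/\p$, and it remains idempotent there since reduction modulo $\p$ is a homomorphism.

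For the second task — which I expect to be the conceptual heart — let $\bar e=e(x)+\p$ and let $\bar y$ be an arbitrary element of $R/\p$. I would compute $\bar e(\bar e\bar y-\bar y)=\bar e^{2}\bar y-\bar e\bar y=\bar e\bar y-\bar e\bar y=0$, using $\bar e^{2}=\bar e$. Because $R/\p$ is a pseudodomain and $\bar e\neq 0$, the only possibility is $\bar e\bar y-\bar y=0$, i.e.\ $\bar e\bar y=\bar y$. As $\bar y$ was arbitrary and multiplication is commutative, $\bar e$ is an identity element of $R/\p$.

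Combining the two tasks, $R/\p$ is a nonzero commutative pseudoring without zero divisors that possesses an identity element $e(x)+\p$, hence a domain, as claimed. The main obstacle is not any single hard computation but invoking the no-zero-divisor property at exactly the right place: the implication $\bar e(\bar e\bar y-\bar y)=0\Rightarrow\bar e\bar y=\bar y$ is precisely where primeness of $\p$ is genuinely used. Without it a nonzero idempotent need not be an identity, as the quotients by the non-prime maximal ideals discussed earlier in the paper illustrate.
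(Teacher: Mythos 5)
Your proof is correct and rests on the same mechanism as the paper's: primeness of $\p$ makes $R/\p$ a pseudodomain (Proposition~\ref{quotient_pseudodomain}), and a single cancellation there forces $e(x)+\p$ to act as the identity. The only real difference is which factor gets cancelled: the paper multiplies $e(x)r$ by $x$ and cancels $x+\p$ (nonzero simply because $x\notin\p$), whereas you cancel $e(x)+\p$ itself, which obliges you to first check $e(x)\notin\p$ --- a step you carry out correctly via $x=x\,e(x)$, and which packages the second half of your argument as the slightly more general observation that a nonzero idempotent in a pseudodomain is an identity element.
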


\begin{proof}
$R/\p$ is a pseudodomain by Proposition \ref{quotient_pseudodomain}. Let $r+\p\in R/\p$ and suppose that $(e(x)+\p)(r+\p)=s+\p$ for some $s\in R$.Then $(x+\p)(xx^{(-1)}+\p)(r+\p)=(x+\p)(s+\p)$, hence $(x+\p)(r+\p)=(x+\p)(s+\p)$. Since $R/\p$ is a pseudodomain and $x+\p\ne 0$, we have $s+\p=r+\p$. Thus $e(x)+\p$ is an identity element of $R/\p$ and $R/\p$ is a domain.
\end{proof}

\begin{corollary}\label{e-e}
Let $R$ be a pseudoring and $\p$ a prime ideal of $R$. Let $x,y$ be two von Neumann invertible elements of $R$ not contained in $\p$. Then $e(x)-e(y)\in\p$.
\end{corollary}

\begin{proof}
Each of $e(x)+\p$ and $e(y)+\p$ is the identity element of $R/\p$ by the previous proposition. Hence $e(x)+\p=e(y)+\p$.
\end{proof}

\begin{proposition}
Let $R$ be a pseudoring and $x\in R\setminus \{0\}$ a von Neumann invertible element. Then $x$ is not contained in the Jacobson radical of $R$.
\end{proposition}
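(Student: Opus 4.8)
The plan is to reduce everything to the idempotent $e := e(x) = xx^{(-1)}$ attached to $x$. By Proposition \ref{idem_tools}(f), $e$ is an idempotent and, since $x \ne 0$, we have $e \ne 0$; moreover axiom (9) gives $xe = x^2x^{(-1)} = x$. Consequently, for any ideal $\m$ of $R$ one has $x \in \m$ if and only if $e \in \m$: if $x \in \m$ then $e = x\cdot x^{(-1)} \in \m$, and if $e \in \m$ then $x = xe \in \m$. Hence it suffices to produce a single maximal ideal of $R$ that does not contain $e$, since $\mathfrak{J}(R)$ is the intersection of all maximal ideals.

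The key device is the decomposition furnished by the idempotent. First I would record that $Re$ is a subpseudoring of $R$ which is in fact a ring with identity element $e$ (for $se \in Re$ we have $(se)e = se^2 = se$), that $Re$ is an ideal of $R$, and that the annihilator $\mathrm{Ann}(e) = \{a \in R : ae = 0\}$ is also an ideal. Writing $r = re + (r - re)$ and noting $(r-re)e = 0$ shows $R = Re + \mathrm{Ann}(e)$, while $se \in \mathrm{Ann}(e)$ forces $se = (se)e = 0$, so $Re \cap \mathrm{Ann}(e) = (0)$. Thus $R = Re \oplus \mathrm{Ann}(e)$ is an internal direct product of the two ideals, and every element of $R$ has a unique $Re$-component and a unique $\mathrm{Ann}(e)$-component.

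Now $Re$ is a nonzero commutative ring with identity $e$, so by the usual Zorn's lemma argument it has a maximal ideal $\mathfrak{n}$; as $e$ is the identity of $Re$, necessarily $e \notin \mathfrak{n}$. I would then set $\m := \mathfrak{n} \oplus \mathrm{Ann}(e)$, an ideal of $R$. Using the direct product decomposition, the quotient $R/\m$ is naturally isomorphic to $Re/\mathfrak{n}$, which is a field because $Re$ is unital and $\mathfrak{n}$ is maximal. Since a field has only the ideals $(0)$ and itself, the only ideals of $R$ containing $\m$ are $\m$ and $R$; that is, $\m$ is a maximal ideal of $R$. Finally $e \notin \m$, for otherwise its $Re$-component (which is $e$ itself) would lie in $\mathfrak{n}$; hence $x \notin \m$, and therefore $x \notin \mathfrak{J}(R)$.

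The statement carries no deep computational difficulty once the right construction is found, and the main obstacle is conceptual: in a pseudoring maximal ideals need not be prime and their quotients need not be fields (as the earlier Example shows), so one cannot argue abstractly about $R/\m$ as in the unital case. The decomposition $R = Re \oplus \mathrm{Ann}(e)$ is exactly what sidesteps this, transporting the problem to the unital ring $Re$, where the identity $e$ automatically escapes every proper—hence every maximal—ideal.
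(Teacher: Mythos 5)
Your proof is correct, but it takes a genuinely different route from the paper's. The paper argues via prime ideals: since $x$ is not nilpotent (Proposition \ref{no_nilp}), the multiplicatively closed set $S_x=\{x,x^2,\dots\}$ misses $0$, so Zorn's lemma gives an ideal $\q$ maximal with respect to $\q\cap S_x=\emptyset$, which is prime by the standard argument; then Proposition \ref{pseudoring_over_p} shows $e(x)+\q$ is an identity of $R/\q$, and this is used to upgrade $\q$ to a \emph{maximal} ideal avoiding $x$. You instead bypass prime ideals entirely: you use the Peirce-type splitting $R=Re\oplus\mathrm{Ann}(e)$ induced by the idempotent $e=e(x)$ (the cross-products vanish since $\mathrm{Ann}(e)\cdot Re=0$), pull a maximal ideal $\mathfrak{n}$ of the unital ring $Re$ back to $\m=\mathfrak{n}\oplus\mathrm{Ann}(e)$, and check $R/\m\cong Re/\mathfrak{n}$ is a field, so $\m$ is maximal and misses $e$, hence misses $x$. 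All the steps check out: $\mathfrak{n}$ is indeed an ideal of all of $R$ (for $n\in\mathfrak{n}$ one has $rn=(re)n\in\mathfrak{n}$), and $e\notin\m$ because multiplying a putative decomposition $e=n+a$ by $e$ forces $e=n\in\mathfrak{n}$. Your construction is arguably more elementary and more explicit for this particular statement, since it never needs the notion of a prime ideal or the "maximal with respect to avoiding a multiplicative set implies prime" lemma; what the paper's route buys is that the ideal it produces is simultaneously prime and maximal, which fits the theme of its later characterization $\mathrm{Spec}(R)=\mathrm{MaxSpec}(R)$ for pseudomeadows. Both arguments use Zorn's lemma in an essential way, yours through the existence of maximal ideals in the nonzero unital ring $Re$.
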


\begin{proof}
The element $x$ is not nilpotent by Proposition \ref{no_nilp}. Hence $S_x=\{x, x^2, x^3,\dots\}$ is a multiplicatively closed set which does not contain $0$. Let $\q$ be an ideal of $R$ maximal with respect to the property $\q\cap S_x=\emptyset$. (It exists by Zorn's lemma.) By a standard argument one can show that $\q$ is prime. Hence by Proposition \ref{pseudoring_over_p}, $R/\q$ is a domain and $e(x)+\q$ is its identity element. Suppose that there is a proper ideal $\afrak$ of $R$ which contains $\q$. Then $x^i\in\afrak$ for some $i$. Hence $e(x)^i\in\afrak$, so that $(e(x)+\q)^i\in \afrak/\q$. This means that the identity element $e(x)+\q$ of $R/\q$ is an $\afrak/\q$, whence $\afrak/\q=R/\q$. Hence $\afrak=R$, a contradiction. Thus $\q$ is a maximal ideal of $R$. Since $x\notin \q$, $x$ does not belong to the Jacobson radical of $R$.
\end{proof}

\begin{proposition}\label{psedomeadow_pseudodomain}
Every pseudomeadow $R$ which is a pseudodomain is a field. For any $x,y\in R\setminus \{0\}$, $e(x)=e(y)$, and the element $e(x)$ $(x\ne 0)$ is the identity element. The inverse of any $x\ne 0$ is the element $x^{(-1)}$.
\end{proposition}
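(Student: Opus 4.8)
The plan is to produce a single explicit identity element, namely $e(x)=xx^{(-1)}$ for a fixed nonzero $x$, and then read off all four assertions from it. First I would use that a pseudodomain is by definition nonzero, so I may choose some $x\in R\setminus\{0\}$; by Proposition~\ref{idem_tools}(f) the element $e:=e(x)=xx^{(-1)}$ is an idempotent, and it is nonzero precisely because $x\neq 0$.

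The heart of the argument is to show that this $e$ is an identity element of $R$. Given any $y\in R$, I would compute, using $e^2=e$ together with associativity and distributivity,
\[
e(ey-y)=e(ey)-ey=(ee)y-ey=ey-ey=0 .
\]
Since $R$ is a pseudodomain and $e\neq 0$, this forces $ey-y=0$, that is, $ey=y$ for every $y\in R$. Hence $e=e(x)$ is an identity element of $R$. This cancellation step is exactly where the pseudodomain hypothesis is used, and it is what compensates for the a priori absence of an identity.

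The remaining claims then follow from the uniqueness of an identity in a pseudoring: if $e,e'$ are both identities, then $e=ee'=e'$. Indeed, for any two nonzero $x,y$ each of $e(x),e(y)$ is an identity by the previous paragraph, so $e(x)=e(y)$; this establishes the second assertion and shows that the common value $1:=e(x)$ is the identity element of $R$. Finally, for $x\neq 0$ the defining equation $e(x)=1$ reads $xx^{(-1)}=1$, so $x^{(-1)}$ is the multiplicative inverse of $x$; as $x$ was an arbitrary nonzero element and $1=e(x)\neq 0$, every nonzero element of $R$ is invertible, whence $R$ is a field.

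I do not expect a serious obstacle here. The only point demanding a little care is the cancellation step in the displayed computation, which relies essentially on the pseudodomain property, together with the observation that the uniqueness-of-identity argument is already valid at the level of pseudorings. Everything else is a direct bookkeeping of the definitions of $e(x)$, $x^{(-1)}$, and the field axioms.
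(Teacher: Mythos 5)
Your proof is correct and follows essentially the same route as the paper: both arguments use the pseudodomain cancellation property to show that $e(x)=xx^{(-1)}$ acts as an identity on every $y\in R$, and then deduce the remaining claims. The only cosmetic difference is that you cancel the nonzero idempotent $e(x)$ (justified by Proposition~\ref{idem_tools}(f)), whereas the paper cancels $x$ itself via $x(xx^{(-1)}y-y)=0$ and then passes through the intermediate ``meadow which is a domain'' case, which your direct appeal to uniqueness of the identity renders unnecessary.
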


\begin{proof}
Let us first assume that $R$ is a meadow which is a domain. Let $x\in R\setminus \{0\}$. We have $xxx^{(-1)}-x=0$, hence $x(xx^{(-1)}-1)=0$, hence $xx^{(-1)}=1$. Thus $x$ is invertible and so $R$ is a field.

Suppose now that $R$ is a pseudomeadow which is a pseudodomain.  Let $x\in R\setminus \{0\}$. Let $y\in R$. Then $xxx^{(-1)}-xy=0$, hence $x(xx^{(-1)}y-y)=0$, hence $xx^{(-1)}y=y$. Thus $e(x)$ is an identity element of $R$. (In particular, $e(x)=e(y)$ for any $x,y\ne 0$.) Hence $R$ is a meadow which is a domain, whence (by the first part of the proof) $R$ is a field. As $xx^{(-1)}$ is the identity element (for any $x\ne 0$), we have that $x^{(-1)}$ is the inverse of $x$.
\end{proof}

\begin{corollary}\label{pseudomeadow_over_p}
Let $R$ be a pseudomeadow and $\p$ a prime ideal of $R$. Then $R/\p$ is a field. The identity element is $e(x)+\p$ for any $x\notin \p$. The inverse of an $x+\p$ $(x\notin \p)$ is the element $x^{(-1)}+\p$.
\end{corollary}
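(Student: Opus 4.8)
The plan is to recognize $R/\p$ as itself a pseudomeadow which happens to be a pseudodomain, and then to invoke Proposition \ref{psedomeadow_pseudodomain} directly. First I note that since $\p$ is prime it is in particular proper, so $R/\p$ is a non-zero pseudoring, and by Proposition \ref{quotient_pseudodomain} it is a pseudodomain. Every claim in the statement then falls out of Proposition \ref{psedomeadow_pseudodomain} \emph{provided} I can exhibit a von Neumann inverse operation on $R/\p$ turning it into a pseudomeadow; this last verification is where all the content sits.

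The natural candidate for the operation on the quotient is $(x+\p)^{(-1)}:=x^{(-1)}+\p$. The issue — and the main obstacle — is well-definedness: a priori $x+\p=y+\p$ need not force $x^{(-1)}+\p=y^{(-1)}+\p$, since $^{(-1)}$ is merely a unary operation, not a polynomial or homomorphic one, so it is not automatic that it respects the congruence modulo $\p$. I would dispose of this by appealing to the uniqueness of von Neumann inverses proved earlier: for any representative $x$, applying the quotient map to the two identities $xxx^{(-1)}=x$ and $xx^{(-1)}x^{(-1)}=x^{(-1)}$ of axiom (9) shows that $x^{(-1)}+\p$ is \emph{a} von Neumann inverse of $x+\p$ in $R/\p$. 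Since von Neumann inverses are unique, any two representatives $x,y$ of the same class yield the same element $x^{(-1)}+\p=y^{(-1)}+\p$, so the operation is well-defined; and the same computation shows axiom (9) holds in $R/\p$. Hence $R/\p$ is a pseudomeadow.

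Now $R/\p$ is a pseudomeadow which is a pseudodomain, so Proposition \ref{psedomeadow_pseudodomain} applies and yields at once that $R/\p$ is a field, that its identity element is $e(x+\p)=(x+\p)(x+\p)^{(-1)}=xx^{(-1)}+\p=e(x)+\p$ for any $x+\p\neq 0$ (equivalently any $x\notin\p$), and that the inverse of $x+\p$ is $(x+\p)^{(-1)}=x^{(-1)}+\p$. The phrase ``for any $x\notin\p$'' is consistent because the identity element of a field is unique; this independence of the choice of $x$ is exactly the content of Corollary \ref{e-e}, which gives $e(x)-e(y)\in\p$.

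As an alternative that sidesteps even the well-definedness discussion, I could argue more directly: choosing any $x\notin\p$ (which exists since $\p$ is proper and which is von Neumann invertible by axiom (9) together with Proposition \ref{vNi_first}), Proposition \ref{pseudoring_over_p} already gives that $R/\p$ is a domain with identity $e(x)+\p$; then the single computation $(x+\p)(x^{(-1)}+\p)=xx^{(-1)}+\p=e(x)+\p$ exhibits $x^{(-1)}+\p$ as a genuine multiplicative inverse of the arbitrary non-zero element $x+\p$, so $R/\p$ is a field. I expect either route to be short, with the only real subtlety being the descent of the $^{(-1)}$ operation in the first approach.
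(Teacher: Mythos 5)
Your proof is correct and follows essentially the same route as the paper: observe that $R/\p$ is a pseudomeadow and (by Proposition \ref{quotient_pseudodomain}) a pseudodomain, then invoke Proposition \ref{psedomeadow_pseudodomain}. The only difference is that you explicitly verify, via uniqueness of von Neumann inverses, that $(x+\p)^{(-1)}:=x^{(-1)}+\p$ is well-defined --- a point the paper's proof leaves implicit --- which is a worthwhile addition rather than a deviation.
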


\begin{proof}
$R/\p$ is a pseudomeadow, which is (by Proposition \ref{quotient_pseudodomain}) a pseudodomain. Hence by the previous proposition, $R/\p$ is a field. The identity element (by the previous proposition) is the element $(x+ \p)(x+\p)^{(-1)}=(x+\p)(x^{(-1)}+\p)=e(x)+\p$ for any $x\notin\p$ and the inverse of $x+\p$ $(x\notin\p)$ is $x^{(-1)}+\p$.
\end{proof}

The next theorem is a characterization of pseudomeadows. The part (b) $\Rightarrow$ (a) is stated by Kaplansky in \cite[page 64, Exercise 22]{k}. The proof of that part that we give here follows his recommendation. In the case of rings the theorem is stated and proved (in a different way) by Goodearl in \cite[Theorem 1.16]{g}.   

\begin{theorem}\label{charact_pseudomeadow}
Let $R$ be a pseudoring. The following are equivalent:

(a) $R$ is a pseudomeadow;

(b) $R$ is reduced and $\mathrm{Spec}(R)=\mathrm{MaxSpec}(R)$.
\end{theorem}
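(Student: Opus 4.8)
The plan is to prove the two implications separately, using throughout that, by Proposition~\ref{vNi_first}, ``$R$ is a pseudomeadow'' means precisely that every $x\in R$ lies in $Rx^{2}$.

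For (a)$\Rightarrow$(b): reducedness is immediate, since in a pseudomeadow every element is von Neumann invertible and hence, by Proposition~\ref{no_nilp}, no nonzero element is nilpotent. To obtain $\mathrm{Spec}(R)=\mathrm{MaxSpec}(R)$ I would prove both inclusions. If $\p$ is prime, then $R/\p$ is a field by Corollary~\ref{pseudomeadow_over_p}; a field has only the ideals $(0)$ and itself, so under the ideal correspondence $\p$ is maximal. Conversely, if $\m$ is maximal, then $R/\m$ is a nonzero pseudomeadow whose only ideals are $(0)$ and itself, and I would check that any such $S$ is a field: for $x\ne 0$ one has $x\in Sx^{2}\subseteq Sx$, so $Sx$ is a nonzero ideal and hence $Sx=S$; taking the nonzero idempotent $e=e(x)$, the equality $Se=S$ forces $se=s$ for all $s$, so $e$ is an identity, after which $Sx=S$ makes every nonzero element invertible. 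Then $R/\m$ is a field, in particular a pseudodomain, so $\m$ is prime by Proposition~\ref{quotient_pseudodomain}.

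For (b)$\Rightarrow$(a), following Kaplansky's recommendation, I fix $x\in R$ and try to show $x\in Rx^{2}$. The first step is the auxiliary fact that $R_{\p}$ is a field for every prime $\p$: the ring $R_{\p}$ has an identity, its primes correspond to the primes of $R$ contained in $\p$, and since every prime of $R$ is maximal the only one is $\p$ itself; as localization preserves reducedness, the unique prime $\p R_{\p}$ equals the nilradical $(0)$, so $R_{\p}$ is a field. Next I would assemble the elementary, reducedness-driven reductions. Writing $\mathrm{Ann}(x)=\{a\in R: ax=0\}$, a short computation gives $\mathrm{Ann}(x)\cap Rx=0$ (if $rx\in\mathrm{Ann}(x)$ then $(rx)^{2}=r\cdot rx^{2}=0$, so $rx=0$), and from this one checks that $\mathrm{Ann}(x)+Rx=R$ is equivalent to $Rx=Rx^{2}$. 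Moreover, once $Rx=Rx^{2}$ is known, writing $x=a+sx$ with $a\in\mathrm{Ann}(x)$ yields $a^{2}=0$, hence $a=0$ and $x=sx\in Rx=Rx^{2}$, which is exactly the goal. So the whole implication reduces to the single equality $Rx=Rx^{2}$.

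The remaining, and hardest, step is therefore to prove $Rx=Rx^{2}$. Since $Rx^{2}\subseteq Rx$ is clear, I must show $Rx\subseteq Rx^{2}$, i.e. that every element of $Rx$ maps to $0$ in $R/Rx^{2}$; as each such image squares to zero, it suffices to show that $R/Rx^{2}$ is reduced. Using Proposition~\ref{quotient_pseudodomain} one sees that a prime $\p$ contains $Rx^{2}$ exactly when $x\in\p$, so by Proposition~\ref{charact_red} applied to $R/Rx^{2}$ the task becomes the Nullstellensatz-type inclusion $\bigcap_{\p\ni x}\p\subseteq Rx^{2}$. This is where the pseudoring setting bites and where I expect the main obstacle: in a general pseudoring a proper ideal may be contained in no prime and the usual local--global principle fails, so one cannot simply pass from ``contained in no prime'' to ``equal to the whole ring.'' The field localizations $R_{\p}$ say that $x$ is, at every prime, either $0$ or a unit---which is the geometric reason the two ideals agree---and the real content of the proof is to convert this local information, together with reducedness, into the global equality $Rx=Rx^{2}$; this is the step I would expect to occupy the bulk of the argument.
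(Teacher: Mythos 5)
Your direction (a)$\Rightarrow$(b) is complete and correct; your treatment of $\mathrm{MaxSpec}(R)\subseteq\mathrm{Spec}(R)$ (passing to the simple pseudomeadow $R/\m$, extracting an identity element from $e=e(x)$ and $Se=S$) is a clean alternative to the paper's argument, which instead uses the multiplicative set $\{x,x^2,x^3,\dots\}$ and the maximality of $\m$. The preliminary reductions in (b)$\Rightarrow$(a) are also all correct as far as they go: the equivalence of $\mathrm{Ann}(x)+Rx=R$ with $Rx=Rx^2$, and the derivation of $x\in Rx^2$ from $Rx=Rx^2$ via $x=a+sx$ with $a^2=0$, both check out.

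However, the proof is not finished. The entire content of (b)$\Rightarrow$(a) has been concentrated into the single unproved claim $Rx\subseteq Rx^2$ (equivalently, that $R/Rx^2$ is reduced, equivalently $\bigcap_{\p\ni x}\p\subseteq Rx^2$), and you explicitly stop there, noting only that this is where the difficulty lies. Since in a pseudoring a proper ideal need not be contained in any prime, and the local information from the fields $R_\p$ cannot be globalized by the usual means (the paper's own local--global principles in Section 5 already presuppose that $R$ is a pseudomeadow), this is a genuine gap rather than a routine verification. The paper closes it with Kaplansky's trick, applied directly to the element $a$ rather than to the ideal $Ra^2$: the set $S=\{a^n-a^{n+1}y\;:\;n\ge 1,\ y\in R\}$ is multiplicatively closed; if $0\notin S$ one takes an ideal $\p$ maximal among those disjoint from $S$, which is prime, hence maximal by (b), so that $R/\p$ is a field by Proposition \ref{quotient_field}, and one manufactures an element $a-ra^2\in\p\cap S$, a contradiction; therefore $a^n=a^{n+1}y$ for some $n\ge 1$ and $y\in R$, and the identity expressing $(a-a^2y)^n$ as a combination of multiples of $a^n-a^{n+1}y$, together with reducedness, yields $a=a^2y$, i.e.\ $a\in Ra^2$. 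Supplying an argument of this kind (or otherwise proving $\bigcap_{\p\ni x}\p\subseteq Rx^2$) is what is missing from your proposal.
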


\begin{proof}
\underbar{(a) $\Rightarrow$ (b):} Let $R$ be a pseudomeadow. By Proposition \ref{no_nilp}, $R$ is reduced. 

Let $\p$ be a prime ideal of $R$. Suppose that $\afrak$ is a proper ideal of $R$ which contains $\p$. Then $\afrak/\p$ is a proper, non-zero ideal of $R/\p$. This contradicts to the fact that $R/\p$ is a field by Corollary \ref{pseudomeadow_over_p}. Hence $\mathrm{Spec}(R)\subseteq \mathrm{MaxSpec}(R)$.

Let $\m$ be a maximal ideal of $R$. Suppose that $\m$ is not prime. Let $a,b\in R$ be such that $a,b\notin \m$, but $ab\in\m$. Let $x$ be an element of $R\setminus \m$. Then $\m\cap \{x,x^2,x^3,\dots\}=\emptyset$. (Suppose to the contrary. Let $n\ge 2$ be the smallest number such that $x^n\in\m$. Then $x^{n-2}x^2x^{(-1)}\in\m$, i.e., $x^{n-1}\in\m$, a contradiction.) Since $\m$ is maximal, $\m+Ra$ contains some $x^i$, i.e., $m_1+r_1a=x^i$ for some $m_1\in\m$, $r_1\in R$, and $i\ge 1$. Similarly $m_2+r_2a=x^j$ for some $m_2\in\m$, $r_2\in R$, and $j\ge 1$. Multiplying these two relations we conclude that $x^{i+j}\in\m$, a contradiction. Hence $\m$ is prime and so $\mathrm{MaxSpec}(R)\subseteq \mathrm{Spec}(R)$. Thus $\mathrm{MaxSpec}(R)= \mathrm{Spec}(R)$.

\smallskip
\underbar{(b) $\Rightarrow$ (a):} Suppose (b) holds. Let $a\in R$. Let 
\begin{equation*}
S=\{a^n-a^{n+1}y\;:\;n\ge 1, y\in R\}.
\end{equation*}
The set $S$ is multiplicatively closed. Suppose $0\notin S$. Let $\p$ be an ideal of $R$ maximal with respect to the property $\p\cap S=\emptyset$. By a standard argument one shows that the ideal $\p$ is prime, hence maximal (as  $\mathrm{Spec}(R)=\mathrm{MaxSpec}(R)$). By Proposition \ref{quotient_field}, $R/\p$ is a field.  We have $a\notin \p$ (otherwise $a^n-a^{n+1}y\in\p$ for any $n$, which is not true). Hence $a^2\notin\p$ and so $R/\p.(a^2+\p)=R/\p$. Hence $(r+\p)(a^2+\p)=a+\p$ for some $r\in R$, i.e., $a-ra^2\in\p$, a contradiction. Thus the hypothesis that $0\notin S$ is wrong. Let $n\ge 1$ and $y\in R$ be such that $a^n-a^{n+1}y=0$. If $n=1$, we have $a-a^2y=0$, hence $a$ is von Neumann invertible by Proposition \ref{vNi_first}. Suppose $n\ge 2$. We have the identity:
\begin{align*}
(a-a^2y)^n &=(a^n-a^{n+1}y)- (n-1)ay(a^n-a^{n+1}y)+\\
  &\binom{n-1}{2}a^2y^2(a^n-a^{n+1}y)-\dots+(-1)^{n-1}a^{n-1}y^{n-1}(a^n-a^{n+1}y).
\end{align*}
Hence $(a-a^2y)^n=0$. Since $R$ is reduced, $a-a^2y=0$. Hence $a$ is von Neumann invertible by Proposition \ref{vNi_first}. Thus $R$ is a pseudomeadow.
\end{proof}

The next proposition is an extension of \cite[Theorem 3]{m}.

\begin{proposition}\label{emb_red_pseudoring}
Let $R$ be a pseudoring. The following are equivalent:

(a) $R$ is reduced;

(b) $R$ can be embedded in a pseudomeadow;

(c) $R$ can be embedded in a direct product of fields.
\end{proposition}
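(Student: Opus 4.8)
The plan is to prove the cycle (a) $\Rightarrow$ (c) $\Rightarrow$ (b) $\Rightarrow$ (a), as two of the three implications are immediate. For (c) $\Rightarrow$ (b), I would use the observation recorded among the examples above that every direct product of fields is a meadow, hence a pseudomeadow; an embedding of $R$ into such a product is then already an embedding into a pseudomeadow. For (b) $\Rightarrow$ (a), I would first note that every pseudomeadow $M$ is reduced: by axiom (9) each element of $M$ is von Neumann invertible, so by Proposition \ref{no_nilp} no non-zero element of $M$ is nilpotent. A subpseudoring of a reduced pseudoring is reduced, since a relation $x^n = 0$ in $R$ is the same relation in $M$; thus any $R$ embeddable in a pseudomeadow is reduced.

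The content lies in (a) $\Rightarrow$ (c). Assuming $R$ reduced, I may set aside the trivial case $R = \{0\}$ (which embeds into any field). By Proposition \ref{charact_red}, reducedness means $\mathfrak{N}(R) = (0)$, i.e. the intersection of all prime ideals of $R$ is $(0)$; since $R \ne \{0\}$, this already forces $\mathrm{Spec}(R) \ne \emptyset$. Consider the diagonal homomorphism
\begin{equation*}
R \longrightarrow \prod_{\p \in \mathrm{Spec}(R)} R/\p, \qquad r \longmapsto (r + \p)_{\p},
\end{equation*}
whose kernel is exactly $\bigcap_{\p} \p = \mathfrak{N}(R) = (0)$, so it is an embedding. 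By Proposition \ref{quotient_pseudodomain} each factor $R/\p$ is a pseudodomain, so it suffices to embed an arbitrary pseudodomain into a field; composing then embeds $R$ into a direct product of fields.

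To embed a pseudodomain $D$ into a field, I would form the localization $S^{-1}D$ at $S = D \setminus \{0\}$, a multiplicatively closed set avoiding $0$ precisely because $D$ has no zero divisors. As recorded in Section 2 this localization is a ring with identity $s/s$, and here every non-zero fraction $a/s$ has the inverse $s/a$, so $S^{-1}D$ is a field. I expect this to be the main obstacle: since $D$ need not possess an identity, the canonical map cannot be the usual $a \mapsto a/1$, and instead must be taken as $a \mapsto at/t$ for any $t \in S$, which is independent of $t$ by commutativity. One then checks that this map is a homomorphism and that it is injective, the latter because $at/t = 0$ forces $ats = 0$ for some $s \in S$, whence $a = 0$ as $ts \ne 0$ in $D$. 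A minor secondary point, already settled above via Proposition \ref{charact_red}, is to ensure a non-zero reduced pseudoring has prime ideals at all, so that the target of the diagonal embedding is non-trivial.
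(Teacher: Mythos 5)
Your proof is correct, but it is not the route the paper takes on two of the three implications, so a comparison is in order. For (a) $\Rightarrow$ (c) the paper simply cites McCoy's theorem from \cite{m}; you instead reprove it: the diagonal map into $\prod_{\p\in\mathrm{Spec}(R)}R/\p$ has kernel $\mathfrak{N}(R)=(0)$ by Proposition \ref{charact_red}, each factor is a pseudodomain by Proposition \ref{quotient_pseudodomain}, and a pseudodomain $D$ embeds into the field $S^{-1}D$ with $S=D\setminus\{0\}$ via $a\mapsto at/t$ --- you correctly identify and handle the one genuine wrinkle, namely that without an identity the canonical map cannot be $a\mapsto a/1$. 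This makes the argument self-contained at the cost of redoing the cited result (it is essentially McCoy's own proof). For (b) $\Rightarrow$ (a) the paper argues through prime ideals: given $0\ne a\in R\subseteq R'$ with $R'$ a pseudomeadow, it invokes Theorem \ref{charact_pseudomeadow} to find a prime $\p'$ of $R'$ avoiding $a$, contracts it to $R$, and concludes $\mathfrak{N}(R)=(0)$. Your version is more elementary and arguably cleaner: a pseudomeadow is reduced because every element is von Neumann invertible and Proposition \ref{no_nilp} rules out non-zero nilpotents, and reducedness is manifestly inherited by subpseudorings since the relation $x^n=0$ is intrinsic; this bypasses Theorem \ref{charact_pseudomeadow} entirely. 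The implication (c) $\Rightarrow$ (b) is identical in both. One small point you rightly flag and settle: a non-zero reduced pseudoring must have a non-empty spectrum, since otherwise $\mathfrak{N}(R)=R$ would make every element nilpotent, contradicting Proposition \ref{charact_red}.
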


\begin{proof}
\underbar{(a) $\Rightarrow$ (c):} This is \cite[Theorem 3]{m}.

\underbar{(c) $\Rightarrow$ (b):} A direct product of fields is a meadow, hence (c) $\Rightarrow$ (b).

\underbar{(b) $\Rightarrow$ (a):} Let $R$ be a pseudoring which is a subpseudoring of a pseudomeadow $R'$. Let $a\in R\setminus \{0\}$. By Theorem \ref{charact_pseudomeadow} there is a prime ideal $\p'$ of $R'$ such that $a\notin\p'$. Then $\p=\p'\cap R$ is a prime ideal of $R$ such that $a\notin\p$. Hence $\mathfrak{N}(R)=(0)$, i.e., $R$ is reduced (by Proposition \ref{charact_red}).
\end{proof}

\begin{remark}
The statement of the next theorem is given in \cite[Corollary 4]{b} and is attributed to \cite[page 552]{ko}. However, the author in \cite{ko} does not deal with von Neumann regular rings, so that the arguments in both \cite{b} and \cite{ko} need to be completed in order to amount to a proof of the next theorem.
\end{remark}

\begin{theorem}[{\cite{b, ko}}]\label{subdirect_fields}
Any pseudomeadow is isomorphic to a subdirect product of fields.
\end{theorem}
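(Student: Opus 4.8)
The plan is to realize $R$ concretely as a subpseudoring of the product of all its residue fields, indexed by the spectrum. I would take the family of fields $(R/\p)_{\p\in\mathrm{Spec}(R)}$ --- these are genuine fields by Corollary \ref{pseudomeadow_over_p} --- and consider the canonical map
\[
\phi\colon R\longrightarrow \prod_{\p\in\mathrm{Spec}(R)} R/\p,
\qquad
\phi(x)=(x+\p)_{\p\in\mathrm{Spec}(R)}.
\]
Since each coordinate is a quotient homomorphism, $\phi$ is a pseudoring homomorphism, and its kernel is exactly $\bigcap_{\p\in\mathrm{Spec}(R)}\p=\mathfrak{N}(R)$.

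First I would establish injectivity. Every element of a pseudomeadow is von Neumann invertible, so Proposition \ref{no_nilp} shows that $R$ has no non-zero nilpotent elements, i.e.\ $R$ is reduced; by Proposition \ref{charact_red} this forces $\mathfrak{N}(R)=(0)$. Hence $\ker\phi=(0)$, so $\phi$ is injective and identifies $R$ with the subpseudoring $\phi(R)$ of $\prod_\p R/\p$. Next I would verify the subdirect condition. Fixing $\p_0\in\mathrm{Spec}(R)$ and an arbitrary $y\in R/\p_0$, surjectivity of the quotient map $R\to R/\p_0$ yields some $x\in R$ with $x+\p_0=y$; then $\phi(x)\in\phi(R)$ has $\p_0$-th coordinate equal to $y$. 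This is precisely the defining property of a subdirect product, and it holds automatically once the coordinate maps are the quotient maps. I would also record the boundary cases: if $R=\{0\}$ the statement is vacuous, while for $R\ne\{0\}$ the equality $\mathfrak{N}(R)=(0)\ne R$ forces $\mathrm{Spec}(R)\ne\emptyset$, so the indexing family is non-empty and the embedding is non-degenerate.

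The formal embedding and the subdirect property are thus essentially free once the two supporting facts are in hand. The genuinely delicate point --- and, I suspect, the reason the classical arguments attributed to K\"othe and Birkhoff required completion --- is not the construction of $\phi$ but the input that each $R/\p$ be an honest \emph{field} rather than merely a pseudodomain. In the pseudoring setting a quotient by a prime need not even possess an identity element, so this step is exactly where the work lives; it rests on Corollary \ref{pseudomeadow_over_p} (itself built on Proposition \ref{psedomeadow_pseudodomain} and the characterization in Theorem \ref{charact_pseudomeadow}). I expect that to be the main obstacle, and it is already resolved by the preceding results, leaving only the routine verification above.
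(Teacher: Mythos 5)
Your proposal is correct and follows essentially the same route as the paper: the canonical map into the product of residue fields, injectivity from the vanishing of the radical (reducedness via Propositions \ref{no_nilp} and \ref{charact_red}), and the subdirect condition from surjectivity of the quotient maps. The only cosmetic difference is that you index by $\mathrm{Spec}(R)$ and invoke Corollary \ref{pseudomeadow_over_p}, while the paper indexes by $\mathrm{MaxSpec}(R)$ and invokes Proposition \ref{quotient_field}; these coincide by Theorem \ref{charact_pseudomeadow}.
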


\begin{proof}
By Proposition \ref{quotient_field} and Theorem \ref{charact_pseudomeadow}, for every maximal ideal $\m$ of $R$ the quotient $R/\m$ is a field. Also by Theorem \ref{charact_pseudomeadow} and Proposition \ref{charact_red}, $\cap_{\m\in\mathrm{MaxSpec}(R)} \m=(0)$. Hence the canonical map $f:R\to \prod_{\m\in\mathrm{MaxSpec}(R)} R/\m$, given by $f(x)=(x+\m)_{\m\in\mathrm{MaxSpec}(R)}$, is injective. It is clear that $f(R)$ is a subdirect product of the family $(R/\m)_{\m\in\mathrm{MaxSpec}(R)}$.
\end{proof}

In connection with the previous theorem, it would be interesting to answer the following question.

\begin{question}
Characterize the pseudorings that are isomorphic to a subdirect product of fields.
\end{question}

Note that besides the pseudomeadows (by Theorem \ref{subdirect_fields}), the ring $\Z$ has that property. Any pseudoring which has that property must be reduced. The zero ring, $\{0\}$, does not have that property.
%----------------------------------------------------------------
\section{Localization of pseudomeadows}

\begin{proposition}\label{vNi_elt_in_ideal}
Let $R$ be a pseudoring, $I$ an ideal of $R$, and $x$ a von Neumann invertible element of $R$. Then $x\in I$ if and only if $x^{(-1)}\in I$.
\end{proposition}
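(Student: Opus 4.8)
The plan is to prove both directions by exploiting the defining relations of the von Neumann inverse, namely $xxx^{(-1)}=x$ and $xx^{(-1)}x^{(-1)}=x^{(-1)}$, together with the fact that $I$ is an ideal (so it is closed under multiplication by arbitrary elements of $R$). The key observation is that each of $x$ and $x^{(-1)}$ can be written as a multiple of the other: from axiom (9) we have $x=x\cdot(xx^{(-1)})$ and $x^{(-1)}=x^{(-1)}\cdot(xx^{(-1)})$, but more usefully $x=(xx^{(-1)})x$ expresses $x$ with a factor of $x^{(-1)}$, and symmetrically $x^{(-1)}$ carries a factor of $x$.

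First I would prove the forward direction. Suppose $x\in I$. I want to produce $x^{(-1)}$ as an element of $R$ times $x$. Using the second relation in axiom (9), $x^{(-1)}=xx^{(-1)}x^{(-1)}=x\cdot(x^{(-1)}x^{(-1)})=x\cdot(x^{(-1)})^2$. Since $x\in I$ and $(x^{(-1)})^2\in R$, and $I$ is an ideal, the product $x\cdot(x^{(-1)})^2$ lies in $I$; hence $x^{(-1)}\in I$.

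The converse is entirely symmetric but relies on the uniqueness of the von Neumann inverse (the second proposition proved in the excerpt). First I would note that $x$ is itself the von Neumann inverse of $x^{(-1)}$: one checks directly that the relations $x^{(-1)}x^{(-1)}x=x^{(-1)}$ and $x^{(-1)}xx=x$, which are precisely the two equations of axiom (9) reordered via commutativity, say exactly that $x$ satisfies the defining conditions for $(x^{(-1)})^{(-1)}$. Consequently $x^{(-1)}$ is von Neumann invertible with $(x^{(-1)})^{(-1)}=x$. Now applying the already-proved forward direction to the von Neumann invertible element $x^{(-1)}$: if $x^{(-1)}\in I$, then its von Neumann inverse, which is $x$, lies in $I$ as well.

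I do not expect a serious obstacle here, since both directions reduce to a single application of axiom (9) followed by the ideal absorption property; the only point requiring a little care is the symmetry step, where one must verify that $x$ genuinely serves as the von Neumann inverse of $x^{(-1)}$ so that the forward direction can be reused. That verification is immediate from commutativity and the two equations in axiom (9), so the whole argument is short.
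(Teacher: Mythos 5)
Your proof is correct and follows essentially the same route as the paper: the forward direction is the identity $x^{(-1)}=xx^{(-1)}x^{(-1)}\in I$, and the converse is the observation that the defining relations are symmetric in $x$ and $x^{(-1)}$ (the paper simply says ``by symmetry''). Your appeal to uniqueness of the von Neumann inverse is harmless but not needed; it suffices that $x$ satisfies the defining relations of a von Neumann inverse of $x^{(-1)}$, or even more directly that $x=xxx^{(-1)}=x^{(-1)}\cdot x^2\in I$.
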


\begin{proof}
If $x\in I$ then $x^{(-1)}=xx^{(-1)}x^{(-1)}\in I$. By symmetry, if $x^{(-1)}\in I$ then $x\in I$.
\end{proof}

\begin{theorem}\label{R_m field}
Let $R$ be a pseudomeadow and $\m$ a maximal ideal of $R$. Then: 

(a) $R_\m$ is a field. In particular, $\m_\m=(0)$.

(b) The canonical map $f_\m:R\to R_\m$, given by $\displaystyle{f_\m(x)=\frac{xs}{s}}$ ($s$ any element of $ R\setminus \m$), is surjective. Its kernel is $\m$, so that, by passing to the quotient, the map $\widetilde{f_\m}:R/\m\to R_\m$, given by $\displaystyle{\widetilde{f_\m}(x+\m)=\frac{xs}{s}}$, is an isomorphism.

(c) $f_\m^{-1}(s/s)=e(s)+\m$, where $s$ is any element of $R\setminus \m$. 

(d) $e(s_1)-e(s_2)\in\m$ for any $s_1, s_2\in R\setminus \m$.
\end{theorem}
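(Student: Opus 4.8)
The plan is to establish (a) first and then derive (b)--(d) from it. By Theorem~\ref{charact_pseudomeadow} the maximal ideal $\m$ is prime, so $S=R\setminus\m$ is multiplicatively closed and (as $0\in\m$) avoids $0$; hence $R_\m$ is a genuine ring with identity $s/s$ for any $s\in S$, and it is nonzero because $s/s=0$ would give $us^2=0$ for some $u\in S$, which is impossible since $us^2\in S$ and $0\notin S$. To prove (a) I would show that $R_\m$ is reduced and has a unique prime ideal. Reducedness is routine: if $(a/s)^n=0$ then $ua^n=0$ for some $u\in S$, whence $(ua)^n=0$, so $ua=0$ as $R$ is reduced, i.e. $a/s=0$. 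For the primes, the description of $\mathrm{Spec}(R_\m)$ recalled in Section~2 identifies them with the primes of $R$ contained in $\m$; since every prime of $R$ is maximal (Theorem~\ref{charact_pseudomeadow}), the only such prime is $\m$ itself, so $\m_\m$ is the unique prime of $R_\m$. Consequently $\mathfrak{N}(R_\m)=\m_\m$, and since $R_\m$ is reduced, Proposition~\ref{charact_red} forces $\m_\m=(0)$. A nonzero ring whose only prime --- hence, as it has a maximal ideal, whose only maximal --- ideal is $(0)$ is a field, giving (a) together with the ``in particular'' claim.

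For (b) the first task is surjectivity of the (routinely well-defined homomorphism) $f_\m$. The key computation is that for $a\in R$ and $t\in S$ one has $f_\m(at^{(-1)})=a\,e(t)/t=a/t$, using $t^{(-1)}\in S$ (Proposition~\ref{vNi_elt_in_ideal}) together with the identity $e(t)t=tt^{(-1)}t=t$ from axiom (9); hence every $a/t$ lies in the image. Next I would compute the kernel: $f_\m(x)=0$ means $vx=0$ for some $v\in S$, and since $vx=0\in\m$ with $\m$ prime and $v\notin\m$, we get $x\in\m$, so $\ker f_\m\subseteq\m$. Now the first isomorphism theorem gives $R/\ker f_\m\cong R_\m$, which is a field by (a), hence a pseudodomain, so $\ker f_\m$ is prime by Proposition~\ref{quotient_pseudodomain}, hence maximal by Theorem~\ref{charact_pseudomeadow}; a maximal ideal contained in the proper ideal $\m$ must equal $\m$, so $\ker f_\m=\m$. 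With surjectivity and $\ker f_\m=\m$ in hand, passing to the quotient yields the isomorphism $\widetilde{f_\m}\colon R/\m\to R_\m$.

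Parts (c) and (d) are then short. For (c), $e(s)\in f_\m^{-1}(s/s)$ because $f_\m(e(s))=e(s)s/s=s/s$ (again $e(s)s=s$), and since the fibre of $f_\m$ over any point is a coset of $\ker f_\m=\m$, we obtain $f_\m^{-1}(s/s)=e(s)+\m$. Part (d) is immediate from Corollary~\ref{e-e}: $s_1,s_2$ are von Neumann invertible elements outside the prime ideal $\m$, so $e(s_1)-e(s_2)\in\m$ (alternatively, both $e(s_1)$ and $e(s_2)$ map under $f_\m$ to the identity $s/s$, so their difference lies in $\ker f_\m=\m$).

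The main obstacle is that, unlike for rings, I cannot prove the inclusion $\m\subseteq\ker f_\m$ by the usual complementary-idempotent trick: for $x\in\m$ the idempotent $e(x)$ again lies in $\m$, and in a ring one would annihilate $x$ using $u=1-e(x)\notin\m$, but a pseudomeadow need not contain $1$, nor any complement of $e(x)$. The plan sidesteps this difficulty entirely by first proving surjectivity and that $R_\m$ is a field, and only then deducing $\ker f_\m=\m$ from the fact that the kernel is a prime (hence maximal) ideal contained in $\m$. The one point demanding care is establishing (a) without circularity, i.e. without invoking the isomorphism $R/\m\cong R_\m$ of part (b); the self-contained reduced-plus-unique-prime argument above is designed precisely to avoid this.
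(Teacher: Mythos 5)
Your proposal is correct and follows essentially the same route as the paper's proof: both reduce (a) to the facts that $\mathrm{Spec}(R_\m)=\{\m_\m\}$ and $R_\m$ is reduced (so $\m_\m=\mathfrak{N}(R_\m)=(0)$), and both obtain $\ker f_\m=\m$ by showing the kernel is a prime, hence maximal, ideal contained in $\m$, with (c) and (d) handled identically. The only cosmetic differences are in (a), where the paper first exhibits explicit von Neumann inverses to make $R_\m$ a pseudomeadow and then inverts $r/s$ directly, while you verify reducedness of $R_\m$ by hand and finish with the abstract ``unique maximal ideal $(0)$'' argument.
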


\begin{proof}
(a) Let $x\in R$ and $s\in R\setminus \m$. Then (by Proposition \ref{vNi_elt_in_ideal}) $s^{(-1)}\in R\setminus \m$. Now $\displaystyle{\frac{x}{s}\frac{x}{s}\frac{x^{(-1)}}{s^{(-1)}}=\frac{x}{s}}$ and $\displaystyle{\frac{x}{s}\frac{x^{(-1)}}{s^{(-1)}}\frac{x^{(-1)}}{s^{(-1)}}=\frac{x^{(-1)}}{s^{(-1)}}}$, so $R_\m$ is a pseudomeadow. The identity element of $R_\m$ is $\displaystyle{\frac{s}{s}}$, $s\in R\setminus \m$. It follows from Theorem \ref{charact_pseudomeadow} that $\mathrm{MaxSpec}(R_\m)=\mathrm{Spec}(R_\m)=\{\m_\m\}$. Since (again by Theorem \ref{charact_pseudomeadow}) $R_\m$ is reduced, then by Proposition \ref{charact_red}, $\m_\m=(0)$. Hence the non-zero elements of $R_\m$ are precisely the elements $r/s$ with both $r,s$ from $R\setminus \m$. Each of them is invertible as $\displaystyle{\frac{r}{s}\frac{s}{r}=\frac{rs}{rs}}$. Thus $R_\m$ is a field.

(b) The canonical map $f_\m:R\to R_\m$ is a pseudoring homomorphism. It is surjective since for any $x/s\in R_\m$ we have $\displaystyle{f_\m(xs^{(-1)})=\frac{xs^{(-1)}s}{s}=\frac{x}{s}}$ as $(xs^{(-1)}s^2-xs)s'=0$ for any $s'\in R\setminus \m$. Also $\mathrm{Ker}(f_\m)=\{x\in R\;:\;xs=0 \text{ for some }s\in R\setminus\m\}\subseteq \m$. Since $\mathrm{Ker}(f_\m)=f_\m^{-1}(0)$ and $(0)$ is a prime ideal of $R_\m$, we have that $\mathrm{Ker}(f_\m)$ is a prime ideal of $R$. Hence by Theorem \ref{charact_pseudomeadow}, $\mathrm{Ker}(f_\m)=\m$. Thus  the map $\widetilde{f_\m}:R/\m\to R_\m$, given by $\displaystyle{\widetilde{f_\m}(x+\m)=\frac{xs}{s}}$, is an isomorphism.

(c) We have $\displaystyle{f_\m(ss^{(-1)})=\frac{ss^{(-1)}s}{s}=\frac{s}{s}}$. Hence $f_\m^{-1}(s/s)=ss^{(-1)}+\m$.

(d) Since $s_1/s_1=s_2/s_2$ for any $s_1,s_2\in R\setminus\m$,
we have by (c) that $s_1s_1^{(-1)}+\m=s_2s_2^{(-1)}+\m$, i.e., $e(s_1)-e(s_2)\in\m$. (Alternatively, the relation $e(s_1)-e(s_2)\in\m$ follows from Corollary \ref{e-e}.)
\end{proof}

\begin{example}
Let $R$ be the ring $\Z_6$ and $\m=\{0,2,4\}$, a maximal ideal of $R$. (One can easily check that $R$ is a meadow. This also follows from \cite[Theorem 2.11]{brs}.) We have  $R\setminus \m=\{1,3,5\}$. By the previous theorem, $|R_\m|=|R/\m|=|R|/|\m|=2$, so that $R_\m\cong \F_2$. Note that $1\cdot 1^{(-1)}-3\cdot 3^{(-1)}=4\in\m$, $1\cdot 1^{(-1)}-5\cdot 5^{(-1)}=2\in\m$, $3\cdot 3^{(-1)}-5\cdot 5^{(-1)}=4\in\m$. Note also that here the set $f_\m^{-1}(1/1)=1+\m=\{1,3,5\}$ strictly contains the set $\{e(s)\;:\;s\in R\setminus\m\}=\{1,3\}$. 
\end{example}

\begin{example}
A pseudoring $R$ is called a {\it Boolean pseudoring} if $x^2=x$ for every $x\in R$. (It then follows that $2x=0$ for every $x\in R$.) Let $R$ be a Boolean pseudoring and $\m$ a maximal ideal of $R$. $R$ is a pseudomeadow and $x^{(-1)}=x$ for every $x\in R$. Fix an $x\in R\setminus \m$. Let $y$ be any element from $R\setminus \m$. By the part (d) of the previous theorem, or by Corollary \ref{e-e}, we have $xx^{(-1)}-yy^{(-1)}\in \m$, hence $x-y\in\m$. Hence there are only two cosets in $R$ with respect to $\m$, so that $R/\m\cong \F_2$. By the part (b) of the previous theorem we have $\R_\m\cong \F_2$. Also, by Theorem \ref{subdirect_fields}, we have that $R$ is isomorphic to a subdirect product of a family of the fields $\F_2$.

Let us now describe the maximal ideals of the pseudomeadow $R=\F_2^{(\N)}$, which consists of all sequences of elements of $\F_2$ with only finitely many non-zero coordinates. For $n\ge 1$ let $e(n)$ be the element of $R$ which has all coordinates equal to $0$ except the $n$-th coordinate, which is $1$. Let $\m$ be a maximal ideal of $R$. Suppose that for an $n\ge 1$ there is an element $x=(x_1, x_2, x_3,\dots)\in \m$ with $x_n=1$. Then $e(n)=e(n)\cdot x\in\m$. Since not all $e(n)$ $(n\ge 1)$ are in $\m$, there is an $r\ge 1$ such that all elements of $\m$ have the $r$-th coordinate equal to $0$. Since $\m$ is a maximal ideal, we conclude that $\m$ consists precisely of all the elements of $R$ which have the $r$-th coordinate equal to $0$. Denote this maximal ideal by $\m(r)$. We deduce (using Theorem \ref{charact_pseudomeadow}) that 
\begin{equation*}
\mathrm{MaxSpec}(R)=\mathrm{Spec}(R)=\{\m(r)\;:\;r\ge 1\}.
\end{equation*}
\end{example}

\medskip
At the end we prove two local-global principles for pseudomeadows, that are standard for rings (see \cite[page 79 and 93]{ku}), however they do not hold for general pseudorings.

\begin{proposition}\label{lg1}
Let $R$ be a pseudomeadow and $I$ an ideal of $R$. Suppose that $I_\m=(0)$ for every $\m\in\mathrm{MaxSpec}(R)$. Then $I=(0)$.
\end{proposition}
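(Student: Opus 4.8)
The plan is to show that every element $a\in I$ is zero by exploiting the vanishing of $I_\m$ locally. Fix $a\in I$ and consider its image in each localization $R_\m$. Since $I_\m=(0)$ by hypothesis, the fraction $a/s$ equals $0$ in $R_\m$ for any $s\in R\setminus\m$; by the definition of equality in the localization, this means there exists some $t\in R\setminus\m$ with $ta=0$. The idea is that this annihilation, happening at every maximal ideal, should force $a$ into every maximal ideal (since a non-unit annihilator witnesses that $a$ becomes zero locally), and then appeal to the fact that $\cap_\m\m=(0)$.

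More precisely, first I would fix $a\in I$ and a maximal ideal $\m$. From $I_\m=(0)$ we get $t a = 0$ for some $t\notin\m$. Now the key step specific to pseudomeadows: since $R$ is a pseudomeadow, $t$ is von Neumann invertible, so $e(t)=tt^{(-1)}$ is an idempotent, and $e(t)\notin\m$ because $t\notin\m$ (using that $\m$ is prime by Theorem \ref{charact_pseudomeadow}, together with Proposition \ref{vNi_elt_in_ideal} applied to show $t^{(-1)}\notin\m$). From $ta=0$ I would derive $e(t)\,a = tt^{(-1)}a = t^{(-1)}(ta)=0$. Thus for each maximal ideal $\m$ there is an idempotent $e_\m:=e(t)\notin\m$ with $e_\m a=0$.

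Next I would argue that $a\in\m$ for every $\m$. Since $e_\m a=0\in\m$ and $e_\m\notin\m$, primality of $\m$ forces $a\in\m$. As this holds for every maximal ideal, $a\in\bigcap_{\m\in\mathrm{MaxSpec}(R)}\m$. By Theorem \ref{charact_pseudomeadow} together with Proposition \ref{charact_red} (as in the proof of Theorem \ref{subdirect_fields}), this intersection is $(0)$, so $a=0$. Since $a\in I$ was arbitrary, $I=(0)$.

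The main obstacle I expect is the careful handling of the localization kernel in the pseudoring setting: concluding $ta=0$ for a genuine $t\in R\setminus\m$ (rather than merely a formal relation) requires the correct definition of equality of fractions in $S^{-1}R$, and I must verify that the annihilator $t$ can be chosen outside $\m$. The surrounding difficulty, absent in the ring case, is that maximal ideals need not be prime in a general pseudoring; here this is precisely rescued by Theorem \ref{charact_pseudomeadow}, which guarantees $\mathrm{MaxSpec}(R)=\mathrm{Spec}(R)$, and it is this primality that legitimizes both the step $e_\m\notin\m$ and the deduction $a\in\m$. I would therefore be careful to invoke that theorem explicitly at each point where primality of $\m$ is used.
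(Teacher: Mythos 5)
Your proof is correct and uses essentially the same ingredients as the paper's: the annihilator $ta=0$ with $t\notin\m$ coming from $a/s=0$ in $R_\m$, the primality of maximal ideals from Theorem \ref{charact_pseudomeadow}, and the fact that $\bigcap_{\m}\m=(0)$; the paper merely phrases it as a contradiction at a single maximal ideal avoiding $a$ rather than showing $a\in\m$ for all $\m$. Your detour through the idempotent $e(t)$ is harmless but unnecessary, since $ta=0\in\m$ with $t\notin\m$ already gives $a\in\m$ directly by primality.
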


\begin{proof}
Suppose to the contrary. Let $a\in I\setminus \{0\}$. By Theorem \ref{charact_pseudomeadow} and Proposition \ref{charact_red} there exists a maximal ideal $\m$ not containing $a$. 
From $a/s=0$ $(s\in R\setminus \m)$ we get $as'=0$ for some $s'\in R\setminus \m$. However, $as'\in R\setminus \m$, a contradiction. Thus $I=(0)$.
\end{proof}

\begin{remark}
Note that an analogous statement is not true for general pseudorings. For example, in the pseudoring $\Z_6$ with the all-products-zero meultiplication, the ideal $\m=\{0,2,4\}$ is maximal, however the complement $R\setminus \m$ is not multiplicatively closed, so we even cannot localize at $\m$.
\end{remark}

\begin{proposition}\label{lg2}
Let $R$ be a pseudomeadow and $I,J$ two ideals of $R$. Suppose that $I_\m=J_\m$ for every $\m\in\mathrm{MaxSpec}(R)$. Then $I=J$.
\end{proposition}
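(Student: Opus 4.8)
The plan is to reduce Proposition \ref{lg2} to the already-established Proposition \ref{lg1} by a standard difference trick, exploiting the fact that localization commutes with the ideal-theoretic operations recorded in the preliminaries. First I would introduce the ideal $K = I\cap J$. The key observation is that for any $\m\in\mathrm{MaxSpec}(R)$ one has $K_\m = (I\cap J)_\m = I_\m\cap J_\m$; since by hypothesis $I_\m = J_\m$, this common value equals $I_\m$ (and equals $J_\m$). The only genuinely non-routine point is the identity $(I\cap J)_\m = I_\m\cap J_\m$, which for ordinary rings is standard but in the pseudoring setting must be checked directly from the definition $I_\m=\{a/s : a\in I,\ s\in R\setminus\m\}$; I expect this to go through using that $R_\m$ is a field by Theorem \ref{R_m field}(a), so that every nonzero element is a unit and clearing denominators causes no trouble.

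With that in hand, I would argue that $I\subseteq J$ and $J\subseteq I$ symmetrically, so it suffices to prove one inclusion. To show $I\subseteq J$, consider the ideal $I$ and note that $(I+J)_\m = I_\m + J_\m = J_\m = (I+J)_\m$ trivially, so instead the cleaner route is to work with the quotient-flavoured statement: apply Proposition \ref{lg1} to an auxiliary ideal whose localizations vanish. Concretely, since $I_\m = J_\m$ for all $\m$, I would show that the ideal $I$ satisfies $(I/K)_\m = (0)$ after passing to $R/K$, using the isomorphism $f:(R/K)_{\m/K}\to R_\m/K_\m$ from the preliminaries together with $f((I/K)_{\m/K}) = I_\m/K_\m$. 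Because $K_\m = I_\m$ as computed above, we get $I_\m/K_\m = (0)$, hence $(I/K)_{\m/K} = (0)$ for every maximal ideal of $R/K$.

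The remaining step is to invoke Proposition \ref{lg1} over the pseudomeadow $R/K$. Here I must be slightly careful: $R/K$ is again a pseudomeadow (being a quotient of a pseudomeadow, as is used repeatedly in Section 4), and the maximal ideals of $R/K$ are exactly the $\m/K$ with $\m\supseteq K$ maximal in $R$. Since $K = I\cap J\subseteq I$, every maximal ideal of $R$ containing $K$ gives a maximal ideal of $R/K$, and these are all of them; thus the hypothesis of Proposition \ref{lg1} is met for the ideal $I/K$ of $R/K$. Proposition \ref{lg1} then forces $I/K = (0)$, i.e. $I = K = I\cap J$, giving $I\subseteq J$. By the symmetric argument $J\subseteq I$, and therefore $I = J$.

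The main obstacle I anticipate is not the overall strategy but the bookkeeping around localization in a ring that lacks a global identity: verifying $(I\cap J)_\m = I_\m\cap J_\m$ and the compatibility $f((I/K)_{\m/K}) = I_\m/K_\m$ must be done from the explicit description of $I_\m$ and of the canonical isomorphism $f$ recorded at the end of Section 2, rather than by citing the usual commutative-ring folklore. Once $R_\m$ is known to be a field (Theorem \ref{R_m field}(a)), these verifications should be routine, so the proof ultimately rests on reducing to Proposition \ref{lg1} via the single ideal $K=I\cap J$.
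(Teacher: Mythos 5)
Your proposal is correct and follows the same overall strategy as the paper: reduce to Proposition \ref{lg1} by passing to a quotient pseudomeadow and transporting the localized ideal through the canonical isomorphism $f$ recorded at the end of Section 2, then finish by symmetry. The one substantive difference is the choice of auxiliary ideal. The paper works in $R/I$ with the ideal $(I+J)/I$ and computes $f(((I+J)/I)_{\m/I})=(I+J)_\m/I_\m=(I_\m+J_\m)/I_\m=(0)$, which uses only the identity $(I+J)_\p=I_\p+J_\p$ that is already stated in the preliminaries. You instead work in $R/K$ with $K=I\cap J$ and the ideal $I/K$, which forces you to establish the extra identity $(I\cap J)_\m=I_\m\cap J_\m$ that the paper never records. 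You correctly flag this as the non-routine point; it does go through (given $a/s=b/t$ with $a\in I$, $b\in J$, pick $u\in R\setminus\m$ with $uat=ubs$, note this common value lies in $I\cap J$ and equals the original fraction over the denominator $ust$, using that $R\setminus\m$ is multiplicatively closed because maximal ideals of a pseudomeadow are prime by Theorem \ref{charact_pseudomeadow}), but it is an additional lemma you would have to write out, whereas the paper's choice of $I+J$ avoids it entirely. Also, the aside in your second paragraph about $(I+J)_\m$ is a false start and should be deleted; the argument you actually carry out is the $K=I\cap J$ one, and it is sound.
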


\begin{proof}
Consider the pseudomeadow $R/I$. Its maximal ideals are the ideals $\m/I$, where $\m$ is a maximal ideal of $R$ which contains $I$. Let $f:(R/I)_{\m/I}\to R_\m/I_\m$ be the canonical isomorphism. Under that isomorphism, $\displaystyle{
f(((I+J)/I)_{\m/I})
=\frac   {(I+J)_\m}  {I_\m}
=\frac{I_\m+J_\m}{I_\m}
=\frac{I_\m}{I_\m}=(0)
}$. 
Hence $((I+J)/I)_{\m/I}=(0)$. Since this holds for all maximal ideals of $R/I$, we have, by the previous proposition, $(I+J)/I=(0)$. Hence $J\subseteq I$. By symmetry, $I\subseteq J$. Thus $I=J$. 
\end{proof}

\bigskip
\small

\end{document}